\theoremstyle{plain}
\newtheorem{thm}{Theorem}
\newtheorem{lemma}{Lemma}
\newtheorem{prop}[lemma]{Proposition}
\theoremstyle{definition}
\newtheorem{definition}{Definition}
\newtheorem{problem}{Problem}
\theoremstyle{remark}
\newcommand{\F}{\mathcal{F}}
\newcommand{\N}{\mathbb{N}}
\renewcommand{\S}{\mathcal{S}}
\renewcommand{\P}{\mathbb{P}}
\newcommand{\A}{\mathbf{A}}
\newcommand{\B}{\mathbf{B}}
\renewcommand{\v}{\mathbf{v}}
\DeclareMathOperator{\rank}{rk}
\newcommand{\E}{\mathbb{E}}
\begin{document}
\title{The cross-correlation measure of families of finite binary sequences: limiting distributions and minimal values}
\author{
L\'aszl\'o M\'erai, 
\\
{\small Johann Radon Institute for Computational and Applied Mathematics}\\
{ \small Austrian Academy of Sciences}\\
{ \small Altenbergerstr.\ 69, 4040 Linz, Austria}\\
{ \small e-mail: merai@cs.elte.hu}}
\maketitle

\begin{abstract}
Gyarmati, Mauduit and S\'ark\"ozy introduced the \textit{cross-correlation measure $\Phi_k(G)$ of order $k$} to measure the level of pseudorandom properties of families of finite binary sequences. 
In an earlier paper we estimated the cross-correlation measure of a random 
family of binary sequences. In this paper, we sharpen these earlier 
results by showing that for random families, the cross-correlation measure converges strongly, and so has limiting distribution. We also give sharp bounds to the minimum values of the cross-correlation measure, 
which settles a problem of Gyarmati, Mauduit and S\'ark\"ozy nearly completely.
\end{abstract}

\textit{2000 Mathematics Subject Classification: Primary} 11K45, 68R15, 60C05

\textit{Key words and phases:} pseudorandom sequences, binary sequence, correlation measure, cross-correlation measure

\let\thefootnote\relax\footnote{The final publication is available at Elsevier via
\url{http://dx.doi.org/10.1016/j.dam.2016.06.024}\\
\copyright \ 2016. This manuscript version is made available under the CC-BY-NC-ND 4.0 license \url{http://creativecommons.org/licenses/by-nc-nd/4.0/} 
}

\section{Introduction}

Recently, in a series of papers the pseudorandomness of \textit{finite binary sequences} $E_N=(e_1,\dots,\allowbreak e_N)\in\{-1,1\}^N$ has been studied. In particular, measures of pseudorandomness have been defined and investigated; see \cite{AKMMR,CMS,measures-survay,measures} and the references therein.

For example, Mauduit and S\'ark\"ozy \cite{measures} introduced the \textit{correlation measure $C_k(E_N)$ of order $k$}  of the binary sequence $E_N$. Namely, for a $k$-tuple $D=(d_1,\dots, d_k)$ with non-negative integers $0\leq d_1 < \dots <d_k<N$ and $M\in\mathbb{N}$ with $M +d_k\leq N$ write
\begin{equation*}
 V_k(E_N,M,D)=\sum_{n=1}^M e_{n+d_1}\dots e_{n+d_{k}}.
\end{equation*}
Then $C_k(E_N)$ is defined as
\begin{equation*}
 C_{k}(E_N)=\max_{M,D}\left| V(E_N,M,D) \right| =\max_{M,D}\left|
\sum_{n=1}^M e_{n+d_1}\dots e_{n+d_{k}} \right|.
\end{equation*}

This measure has been widely studied, see, for example \cite{ACS,AKMMR-min,AKMMR,CMS,gyarmati-correlation-measure,MS-2003,kai}.
In particular, Alon,  Kohayakawa, Mauduit,  Moreira and R\"odl \cite{AKMMR} obtained the typical order of magnitude of $C_k(E_N)$.  They proved that, if $E_N$ is chosen uniformly from $\{-1,+1\}^N$, then for all $0<\varepsilon<1/16$ the probability that
\begin{equation*}
 \frac{2}{5}\sqrt{N \log \binom{N}{k}}<C_k(E_N)< \frac{7}{4}\sqrt{N \log \binom{N}{k}}
\end{equation*}
holds for every integer $2\leq k \leq N/4$ is at least $1-\varepsilon$ if $N$ is large enough. (Here, and in what follows, we write $\log$ for the natural logarithm, and $\log _a$ for the logarithm to base $a$.)

They also showed in \cite{AKMMR}, that the correlation measure $C_k(E_N)$ is concentrated around its mean $\E[C_k]$. Namely, for all $\varepsilon >0$ and integer function  $k=k(N)$ with $2\leq k\leq \log N -\log\log N$  the probability that
\begin{equation*}
 1-\varepsilon<\frac{C_k(E_N)}{\E[C_k]}<1+\varepsilon
\end{equation*}
holds is at least $1-\varepsilon$ if $N$ is large enough.

Recently, K.-U. Schmidt studied the limiting distribution of  $C_k(E_N)$ \cite{kai}. He showed that if $e_1,e_2,\ldots \in\{-1,+1\}$ are chosen independently and uniformly, then for fixed $k$
\begin{equation*}
 \frac{C_k(E_N)}{\sqrt{2N\log \binom{N}{k-1}}}\rightarrow 1 \quad \text{almost surely},
\end{equation*}
as $N\rightarrow \infty$, where $E_N=(e_1,\dots, e_N)$.

Let us now turn to the minimal value of $C_k(E_N)$. Clearly,
\begin{equation*}
 \min \{C_{k}(E_N): \ E_N\in\{-1,+1\}\}=1 \text{ for odd } k,
\end{equation*}
where the minimum is reached by the alternating sequence $(1,-1,1,-1,\dots)$. However, for even order, Alon,  Kohayakawa, Mauduit,  Moreira and R\"odl \cite{AKMMR-min} showed that
\begin{equation}\label{eq:c_k-min}
 \min \{C_{2k}(E_N): \ E_N\in\{-1,+1\}\}>\sqrt{\frac{1}{2} \left\lfloor \frac{N}{2k+1}\right\rfloor},
\end{equation}
see also \cite{kai}.

In order to study the pseudorandomness of \textit{families} of finite binary sequences instead of single sequences, Gyarmati, Mauduit and S\'ark\"ozy \cite{cross-correlation} introduced the notion of the \textit{cross-correlation measure} (see also the survey paper \cite{sarkozy-family-survey}).

\begin{definition}
For positive integers $N$ and $S$, consider a map
\[
 G_{N,S}:\{1,2,\dots, S\} \rightarrow \{-1,+1\}^N,
\]
and write $G_{N,S}(s)=(e_1(s),\dots, e_N(s))\in\{-1,1\}^N$ ($1\leq s \leq S$).

The \textit{cross-correlation measure $\Phi_k \left(G_{N,S}\right)$ of order $k$} of  $G_{N,S}$ is defined as
\[
 \Phi_k \left(G_{N,S}\right)=\max\left|\sum_{n=1}^M e_{n+d_1}(s_1)\cdots e_{n+d_{k}}(s_k)\right|,
\]
where the maximum is taken over all integers $M, d_1,\dots, d_k$ and $1\leq s_1,\dots, s_k\leq S$
such that $0\leq d_1\leq d_2\leq \dots \leq d_k<M+d_k\leq N$ and $d_i\neq d_j$ if $s_i=s_j$.
\end{definition}

We remark that in \cite{cross-correlation} only injective maps $G_{N,S}$ were considered, and the cross-cor\-re\-la\-tion measure is defined for the families $\F=\{G_{N,S}(s): \ s=1,2,\dots, S\}$ of size $S$.

The typical order of magnitude of $\Phi_k \left(G_{N,S}\right)$ was established  in \cite{merai-cross-correlation-typical} for large range of $k$ and  for random maps $G_{N,S}$, i.e. when all $e_n(s)\in\{-1,+1\}$  ($1\leq n \leq N$, $1\leq s \leq S$) are chosen independently and uniformly.

\begin{thm}\label{thm:typical}
For a given $\varepsilon >0$, there exists $N_0$, such that if $N>N_0$ and $1\leq \log_2 S< N/12$, 
then we have with probability at least $1-\varepsilon$, that
\begin{align*}
 \frac{2}{5} \sqrt{N\left(\log \binom{N}{k}+k\log S\right)} <\Phi_k \left(G_{N,S}\right) &< \frac{5}{2}\sqrt{N\left(\log \binom{N}{k}+k\log S \right)}
\end{align*}
for every integer $k$ with $2\leq k \leq N/(6\log_2 S)$.
\end{thm}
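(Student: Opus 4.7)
The plan is to adapt the approach of Alon, Kohayakawa, Mauduit, Moreira and R\"odl for the single-sequence correlation measure $C_k(E_N)$, combining a concentration inequality with a union bound for the upper tail, and a near-independence argument for the lower tail. The only conceptual novelty is bookkeeping: each factor in the sum $V(M, D, s_1, \ldots, s_k) = \sum_{n=1}^M e_{n+d_1}(s_1) \cdots e_{n+d_k}(s_k)$ now carries a sequence label $s_i \in \{1, \ldots, S\}$, so the relevant union bound ranges over roughly $N \binom{N}{k} S^k$ admissible configurations instead of just $N \binom{N}{k}$. This is precisely what produces the extra $k \log S$ term inside the square root.

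For the upper bound I would fix an admissible tuple $(M, D, s_1, \ldots, s_k)$ and exploit the condition $d_i \neq d_j$ whenever $s_i = s_j$: it guarantees that in each product $e_{n+d_1}(s_1) \cdots e_{n+d_k}(s_k)$ every underlying random variable appears at most once, so the summand has mean zero. Although the $M$ summands are not independent across $n$, an even-moment calculation (counting pairings of the indices $n_1, \ldots, n_{2r}$ forcing all variables to cancel) yields a subgaussian-type tail $\P(|V| > \lambda \sqrt{M}) \leq 2 \exp(-\lambda^2/2)$ in the relevant regime. Choosing $\lambda$ slightly larger than $\sqrt{2(\log \binom{N}{k} + k \log S + \log N)}$, union-bounding over the at most $N \binom{N}{k} S^k$ admissible configurations, and then over the allowed range of $k$ at negligible extra cost, delivers the upper bound with constant $5/2$.

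For the lower bound I would construct a large family $\mathcal{A}$ of admissible tuples $(M, D, s_1, \ldots, s_k)$ with $M$ comparable to $N$, whose associated sums $V$ are pairwise nearly uncorrelated. A standard Paley--Zygmund / Berry--Esseen argument then shows that $\max_{\mathcal{A}} |V|$ exceeds $(2/5)\sqrt{2 M \log |\mathcal{A}|}$ with probability $1 - o(1)$. It suffices to arrange $\log |\mathcal{A}| \gtrsim \log \binom{N}{k} + k \log S$: varying the sequence indices $(s_1, \ldots, s_k)$ alone contributes the $k \log S$ term, while varying $D$ in a collection of nearly disjoint shifts contributes the $\log \binom{N}{k}$ term exactly as in the single-sequence argument.

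The main obstacle, as usual, is obtaining these tail estimates \emph{simultaneously} for all $k$ in the range $2 \leq k \leq N/(6 \log_2 S)$ with the stated numerical constants. The upper endpoint of this range is exactly what is needed so that the exponent $\log \binom{N}{k} + k \log S$ stays below a small constant times $N$, which in turn ensures both that the subgaussian regime is valid (so the even-moment bound is still sharp) and that the lower-bound family $\mathcal{A}$ can be built inside $\{1, \ldots, N\}^k \times \{1, \ldots, S\}^k$ with sufficiently disjoint supports. Controlling this uniformity in $k$ while preserving the constants $2/5$ and $5/2$ is where the bulk of the technical work sits.
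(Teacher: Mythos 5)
First, a point of reference: the paper contains no proof of Theorem \ref{thm:typical}; it is imported verbatim from \cite{merai-cross-correlation-typical}, so the comparison below is with the strategy of that reference, which --- like your outline --- adapts \cite{AKMMR}. Your upper-bound half is essentially sound, and in fact easier than you suggest: for a \emph{fixed} admissible tuple $(d_1,\dots,d_k,s_1,\dots,s_k)$ the summands $f_n=e_{n+d_1}(s_1)\cdots e_{n+d_k}(s_k)$ are not merely mean-zero but \emph{mutually independent} Rademacher variables. Indeed, for any nonempty $T\subseteq\{1,\dots,M\}$, picking any label $s$ occurring in the tuple and setting $d^*=\min\{d_i:s_i=s\}$, the variable $e_{\min T+d^*}(s)$ occurs with multiplicity exactly one in $\prod_{n\in T}f_n$, so every nonempty subset product has mean zero. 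Hence Hoeffding gives $\P\left[|V|>\lambda\sqrt M\right]\le 2e^{-\lambda^2/2}$ outright, your even-moment count reduces exactly to the i.i.d.\ pairing count, and the union bound over the at most $\binom{NS}{k}$ admissible $(D,s)$-tuples (whose logarithm is comparable to $\log\binom Nk+k\log S$) closes with room to spare for the constant $5/2$ and for the sum over the range of $k$. The only dependence issues arise \emph{across} different tuples, which is the content of Lemma \ref{lemma:independence}.

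The gap is in the lower bound, where two of your tools do not apply in the regime you need them. First, Berry--Esseen is vacuous here: the individual tail probabilities you must bound from below are of order $1/|\mathcal A|$, which can be as small as $e^{-cN}$ when $k$ is near $N/(6\log_2 S)$, far below the $O(M^{-1/2})$ Berry--Esseen error; one needs an explicit Stirling-type lower bound on $\P\left[\mathrm{Bin}(M,1/2)\ge M/2+a\right]$ valid for $a$ up to a constant times $M$, and the degradation of the Gaussian constant in that range is precisely where $2/5$ comes from. Second, and more seriously, ``pairwise nearly uncorrelated'' is not a usable hypothesis for the second-moment (Paley--Zygmund/Chebyshev) step. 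Uncorrelatedness of $V_D$ and $V_{D'}$ does follow from pairwise independence of the summands, but it gives no control on the joint tail $\P\left[|V_D|>\lambda\cap|V_{D'}|>\lambda\right]$ at exponentially small probability levels, and that joint tail must be shown to be at most roughly $\P\left[|V_D|>\lambda\right]\P\left[|V_{D'}|>\lambda\right](1+o(1))$ for the method to return probability $1-o(1)$. Establishing this requires a joint even-moment estimate counting the ``cross-even'' index configurations created by two distinct shift patterns $D\neq D'$ --- the analogue of Lemmas \ref{lemma:d-even}--\ref{lemma:intesection} in the present paper and of the corresponding claims in \cite{AKMMR} --- and this is the technical core of the lower bound. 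Your outline does not identify this step, and without it the proof does not close.
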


Our first result tells that analogously to the correlation measure of binary sequences, the cross-correlation measure of families $\Phi_k \left(G_{N,S}\right)$ is concentrated around its mean $\E\left[ \Phi_k \left(G_{N,S}\right) \right]$ if $k$ is small enough.

\begin{thm}\label{thm:conv-prob}
 For any fixed constant $\varepsilon>0$ and any integer function $k=k(N)$ with $2\leq k \leq (\log N + \log S)/ \log \log N$, there is a constant $N_0\geq 12 \log_2 S$ for which the following holds. If $N\geq N_0$, then the probability that
 \[
  1-\varepsilon < \frac{\Phi_k(G_{N,S})}{\E\left[\Phi_k(G_{N,S}) \right]}<1+\varepsilon
 \]
holds is at least $1-\varepsilon$.
\end{thm}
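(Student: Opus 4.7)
The plan is to combine a concentration-of-measure inequality for $\Phi_k(G_{N,S})$ around its mean with the lower bound on $\E[\Phi_k(G_{N,S})]$ afforded by Theorem~\ref{thm:typical}. View $\Phi_k$ as a function of the $NS$ independent $\pm 1$ bits $\{e_n(s)\}_{1\le n\le N,\,1\le s\le S}$. A naive Azuma--Hoeffding bound, using only that $\Phi_k$ is $2$-Lipschitz in every bit, yields concentration only on the scale $\sqrt{NS}$; this is too weak once $S$ is large, since $\sqrt{NS}$ can easily exceed $\E[\Phi_k]$. The saving structural observation is that if the maximum defining $\Phi_k$ is attained at a tuple $(M^\ast, D^\ast, s_1^\ast,\dots,s_k^\ast)$, then the witnessing sum involves at most $kM^\ast\le kN$ distinct bits: each of its $M^\ast$ terms is a product of $k$ bits indexed by pairwise distinct pairs $(n+d_i^\ast, s_i^\ast)$.

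This ``certifiable maximum'' structure is exactly what is needed to apply Talagrand's convex distance inequality, which I would use to obtain
\[
\Pr\bigl[\,|\Phi_k-\mathrm{Med}(\Phi_k)|\ge t\,\bigr] \le 4\exp\bigl(-c_1 t^2/(kN)\bigr)
\]
for an absolute constant $c_1>0$ and all $t\ge 0$. A standard argument passes from the median to the mean at the cost of an $O(\sqrt{kN})$ additive error. Taking $\varepsilon'=1/16$ in Theorem~\ref{thm:typical} then yields $\E[\Phi_k]\ge c_2\sqrt{N(\log\binom{N}{k}+k\log S)}$ for all $N\ge N_0$ in the stated range of $k$.

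Setting $t=\varepsilon\E[\Phi_k]$ and using $\log\binom{N}{k}\ge k\log(N/k)$, the exponent in the Talagrand bound is at least
\[
c_3\varepsilon^2\,\frac{\log\binom{N}{k}+k\log S}{k}\;\ge\;c_3\varepsilon^2\bigl(\log(N/k)+\log S\bigr).
\]
Under the hypothesis $k\le(\log N+\log S)/\log\log N$ one checks that $\log(N/k)+\log S\to\infty$ as $N\to\infty$ (when $\log S$ is bounded, $k=O(\log N/\log\log N)$ forces $\log(N/k)\sim\log N$; when $\log S$ grows the second summand already diverges). Hence the tail probability drops below $\varepsilon$ for $N\ge N_0$, giving the desired two-sided bound.

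The hardest point is isolating the correct concentration scale: the uniform coordinate-wise Lipschitz constant $2$ across all $NS$ bits misses the local support of the maximum and must be replaced by the certificate-size bound $kN$ via Talagrand (or an equivalent self-bounding/configuration-function inequality). A secondary subtlety is that for very large $S$ the $k$-range of Theorem~\ref{thm:typical} is narrower than that of the present theorem; in that regime the lower bound on $\E[\Phi_k]$ must be supplied by an independent argument, for instance a Paley--Zygmund/second-moment estimate applied to the maximum over a sparse subfamily of tuples, again giving $\E[\Phi_k]$ of order $\sqrt{N(\log\binom{N}{k}+k\log S)}$.
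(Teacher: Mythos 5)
Your overall architecture (concentration around the mean plus the lower bound on $\E[\Phi_k]$ from Theorem~\ref{thm:typical}) is exactly the paper's, and your final computation that the exponent diverges because $(\log N+\log S)/k\ge\log\log N\to\infty$ is the same as the paper's. The genuine difference is in how the concentration is obtained. You correctly diagnose that treating the $NS$ bits as separate coordinates is too weak, but you then reach for Talagrand's convex distance inequality, whereas the paper (Lemma~\ref{lemma:distance}) gets the same effect from plain McDiarmid by the much lighter device of taking the $N$ \emph{columns} $(e_j(1),\dots,e_j(S))\in\{-1,1\}^S$ as the independent coordinates: changing one column alters any single sum by at most $2k$, so $\sum_j c_j^2\le 4k^2N$ and $\P[|\Phi_k-\E[\Phi_k]|\ge\theta]\le 2\exp\{-\theta^2/(2k^2N)\}$. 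Your route works in principle, but your claimed rate $\exp(-c_1t^2/(kN))$ overstates it: a single bit $e_j(s)$ can occur in up to $k$ terms of the witnessing sum (when $s$ is repeated among $s_1,\dots,s_k$ with different shifts, the ``auto-correlation'' case), so its influence is $2m_{j,s}$ with $\sum m_{j,s}=kM$ and $m_{j,s}\le k$, giving a certificate of squared $\ell_2$-norm up to $4k^2N$ --- i.e.\ the same scale $k\sqrt N$ as McDiarmid, not $\sqrt{kN}$. This costs you only a factor $k$ in the exponent, and since $\varepsilon^2\E[\Phi_k]^2/(k^2N)\gtrsim\varepsilon^2(\log N+\log S)/k\ge\varepsilon^2\log\log N\to\infty$ the conclusion survives, but Talagrand (plus the median-to-mean passage) buys you nothing here over the block-McDiarmid trick. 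Your closing remark about the mismatch between the $k$-range of Theorem~\ref{thm:typical} ($k\le N/(6\log_2 S)$) and that of the present theorem when $S$ is very large is a legitimate observation --- the paper's own proof simply invokes Theorem~\ref{thm:typical} without comment, so your proposed second-moment patch addresses a point the paper glosses over rather than one it resolves; in the regime of fixed or polynomially growing $S$ the ranges are compatible and no patch is needed.
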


Next, we improve the upper bound in Theorem \ref{thm:typical}.

\begin{thm}\label{thm:upperbound}
For  positive integers $N,S$ and for $1\leq s\leq S$ write $G_{N,S}(s)=(e_1(s),e_2(s),\dots,\allowbreak e_N(s))$ ($1\leq s\leq S$). Let $e_n(s)$  ($1\leq n \leq N$, $1\leq s\leq S$) be drawn independently and uniformly at random from $\{-1,1\}$. For all $\varepsilon>0$ we have
\begin{multline*}
  \P\Bigg[ \Phi_k\left(G_{N,S}\right)\leq (1+\varepsilon)\sqrt{2N \log\left( \binom{SN}{k}-\binom{S(N-1)}{k} \right)} \\
  \text{for all $k$ satisfying } 2\leq k < SN  \Bigg]\rightarrow 1,
\end{multline*}
as $N\rightarrow \infty$.
\end{thm}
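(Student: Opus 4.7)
Following K.-U.\ Schmidt's almost-sure analysis of $C_k(E_N)$ in \cite{kai}, the plan is to combine a sub-Gaussian tail estimate for each admissible sum with a union bound over a carefully counted family of configurations. Setting $C_k:=\binom{SN}{k}-\binom{S(N-1)}{k}$, the threshold $(1+\varepsilon)\sqrt{2N\log C_k}$ comes from matching a variance proxy of $N$ (every admissible sum has $M\le N$ summands of absolute value $1$) against a union-bound penalty of order $\log C_k$.

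For the single-sum tail, fix an admissible tuple $(M;s_1,d_1,\ldots,s_k,d_k)$ and write $V=\sum_{n=1}^{M}P_n$ with $P_n=\prod_{i=1}^{k}e_{n+d_i}(s_i)\in\{-1,+1\}$; since the $k$ positions $(s_i,n+d_i)$ are pairwise distinct for each fixed $n$, each $P_n$ is a product of $k$ independent Rademachers. Expanding
\[
\E\!\left[e^{\lambda V}\right]=\sum_{A\subseteq\{1,\ldots,M\}}(\cosh\lambda)^{M-|A|}(\sinh\lambda)^{|A|}\,\E\!\left[\prod_{n\in A}P_n\right]
\]
and using that $\E[\prod_{n\in A}P_n]$ vanishes unless every $e$-variable in the combined product appears with even multiplicity, one obtains $\E[e^{\lambda V}]\le(\cosh\lambda)^M\le e^{\lambda^2 N/2}$ in the non-degenerate case, and hence $\P[|V|>x]\le 2\exp(-x^2/(2N))$ after optimizing in $\lambda$.

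Next, associate to each admissible tuple the $k$-set $T:=\{(s_i,d_i+M):1\le i\le k\}\subseteq\{1,\ldots,S\}\times\{1,\ldots,N\}$ and call the tuple \emph{canonical} when $T$ meets the last slice $\{1,\ldots,S\}\times\{N\}$. There are exactly $C_k$ canonical tuples, and every admissible sum can be written (after translation) as a difference of two canonical sums with shifted offsets, so up to an $o(1)$ correction absorbed by the $(1+\varepsilon)$ slack it suffices to control canonical tuples. Taking $x=(1+\varepsilon)\sqrt{2N\log C_k}$, the single-sum bound together with a union bound over the $C_k$ canonical tuples yields a per-$k$ failure probability of at most $2C_k^{-\varepsilon(2+\varepsilon)}$; summing over $2\le k<SN$ and invoking Borel--Cantelli then finishes the proof, since $C_k$ grows polynomially in $N$ uniformly in the relevant range.

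The main obstacle is the tail bound in the presence of \emph{degenerate} tuples, where some nonempty $A\subseteq\{1,\ldots,M\}$ makes $\prod_{n\in A}P_n$ almost surely constant due to pairwise cancellation of the $e$-variables; then the MGF acquires non-vanishing cross-moment contributions beyond $(\cosh\lambda)^M$ and the naive Hoeffding bound is inadequate. Following \cite{kai}, the remedy is to quotient $\{P_n\}$ by its cancellation subgroup in $\mathbb{F}_2^{\{1,\ldots,M\}}$, pass to an independent generating subfamily of nearly full rank, and absorb the deterministic residue into $\E[V]$ before applying Hoeffding. A secondary subtlety is the reduction from arbitrary admissible sums to canonical ones, which requires either a L\'evy-type maximal inequality (in the generic independent-product case) or the same rank analysis in order not to inflate the leading constant.
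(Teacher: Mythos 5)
Your overall strategy (a sub-Gaussian tail for each admissible sum, a union bound over exactly $C_k=\binom{SN}{k}-\binom{S(N-1)}{k}$ configurations, then a sum over $k$) is the same as the paper's, but the step reducing arbitrary admissible sums to your ``canonical'' ones has a genuine gap. Writing an interval sum as a difference of two prefix sums of the same configuration bounds it by \emph{twice} the bound you establish for each single sum, and a factor of $2$ in the threshold cannot be ``absorbed by the $(1+\varepsilon)$ slack'': you would only obtain $\Phi_k\le 2(1+\varepsilon)\sqrt{2N\log C_k}$, which is not the stated theorem and does not improve on Theorem~\ref{thm:typical}. The paper avoids this by anchoring $d_1=0$ and invoking Lemma~\ref{lemma:randomwalk} (Schmidt's maximal inequality for $R_N=\max_{m_1\le m_2}|\sum_{j=m_1}^{m_2}X_j|$), which controls the maximum over \emph{all} subintervals at the \emph{same} threshold, paying only a multiplicative $\log N$ in the failure probability. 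Your parenthetical mention of a L\'evy-type maximal inequality is the correct repair, but it must replace the difference decomposition rather than supplement it.

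Two further points. First, the ``degenerate tuples'' you single out as the main obstacle do not exist: for an admissible configuration the constraint $d_i\ne d_j$ whenever $s_i=s_j$ forces, for each $s$, the polynomial $D_s(x)=\sum_{i:s_i=s}x^{d_i}$ to be nonzero over $\mathbb{F}_2$, so for every nonempty $A$ the product $A(x)D_s(x)$ is nonzero and some variable $e_j(s)$ occurs with odd multiplicity in $\prod_{n\in A}P_n$; hence $\E\left[\prod_{n\in A}P_n\right]=0$ for all nonempty $A$ and the $P_n$ are genuinely mutually independent (this is the content behind Lemma~\ref{lemma:independence}, cf.\ \cite[Claim 18]{AKMMR}). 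No quotienting by a cancellation subgroup is needed, and both Hoeffding and L\'evy then apply verbatim. Second, your justification of the sum over $k$ is too quick: the uniform bound $C_k\ge SN-1$ only gives $\sum_k C_k^{-\varepsilon(2+\varepsilon)}\le SN\cdot(SN-1)^{-\varepsilon(2+\varepsilon)}$, which does not tend to $0$ when $\varepsilon(2+\varepsilon)<1$; one must split the range of $k$ as the paper does (finitely many small $k$, then $k\ge M$ with $M\gamma>1$ where $C_k\ge\binom{SN-1}{M}$, plus the symmetric upper tail). Finally, Borel--Cantelli is out of place here: the claim is convergence in probability for each $N$, not an almost sure statement (that is Theorem~\ref{thm:limit}).
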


In order to obtain the asymptotic distribution of the cross-correlation measure $\Phi_k \left(G_{N,S}\right)$, 
consider the set $\Omega$ of all maps $G_S:\N \rightarrow \{-1,+1\}^{\N\times S}$ and write $G_S(s)=(e_1(s),e_2(s),\dots)$ for $1\leq s \leq S$. Let us endow $\Omega$  the  probability measure
\begin{equation}\label{eq:prob}
 \P[G_S \in\Omega: 
 \ e_1(i)=c_{1,i}, e_2(i)=c_{2,i},\dots, e_N(i)=c_{N,i}, \ i=1,\dots, S ]=2^{-NS}
\end{equation}
for all $N\in\N$ and all $(c_{1,i},c_{2,i},\dots, c_{N,i})_{i=1}^S\in\{-1,1\}^{N\times S}$.

\begin{thm}\label{thm:limit}
Let $S$ be a positive integer and let $G_S$ be drawn from $\Omega$ equipped with the  probability measure defined by \eqref{eq:prob} with $G_S(s)=(e_1(s),e_2(s),\dots)$ for $1\leq s \leq S$. Let $G_{N,S}(s)=(e_1(s),\dots,e_N(s))$ for $1\leq s \leq S$. For a fixed $k\geq 2$ we have 
 \[
  \frac{\Phi_k\left(G_{N,S}\right) }{\sqrt{2N\log \binom{N}{k-1} }} \rightarrow 1 \quad \text{almost surely}
 \]
as $N\rightarrow \infty$.
\end{thm}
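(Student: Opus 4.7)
The proof splits into a lower bound $\liminf \geq 1$ and an upper bound $\limsup \leq 1$, both almost surely.

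For the lower bound I restrict the maximum in the definition of $\Phi_k(G_{N,S})$ to tuples with $s_1 = s_2 = \cdots = s_k = 1$. The constraint $d_i \neq d_j$ whenever $s_i = s_j$ then forces $d_1 < d_2 < \cdots < d_k$, so the restricted maximum is exactly $C_k(G_{N,S}(1))$. Since $G_{N,S}(1)$ has i.i.d.\ Rademacher entries, Schmidt's theorem (quoted in the introduction) yields
\[
\liminf_{N\to\infty}\frac{\Phi_k(G_{N,S})}{\sqrt{2N\log\binom{N}{k-1}}} \geq \lim_{N\to\infty}\frac{C_k(G_{N,S}(1))}{\sqrt{2N\log\binom{N}{k-1}}} = 1 \quad \text{a.s.}
\]

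For the upper bound I combine Theorem \ref{thm:upperbound} with Borel--Cantelli. For fixed $k$ and $S$ the identity
\[
\binom{SN}{k} - \binom{S(N-1)}{k} = \sum_{j=1}^{\min(k,S)}\binom{S}{j}\binom{S(N-1)}{k-j} = (1+o(1))\, S^k\binom{N}{k-1}
\]
gives $\log\bigl(\binom{SN}{k}-\binom{S(N-1)}{k}\bigr) = (1+o(1))\log\binom{N}{k-1}$, so Theorem \ref{thm:upperbound} already yields $\Phi_k(G_{N,S}) \leq (1+\varepsilon+o(1))\sqrt{2N\log\binom{N}{k-1}}$ in probability. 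To upgrade this to an almost-sure statement, I extract from the proof of Theorem \ref{thm:upperbound} a polynomial tail bound $\P\bigl[\Phi_k(G_{N,S})>(1+\varepsilon)\sqrt{2N\log\binom{N}{k-1}}\bigr] \leq N^{-c(\varepsilon)}$ with $c(\varepsilon)>0$. Along the geometric subsequence $N_j = \lceil(1+\delta)^j\rceil$ these probabilities are summable, so Borel--Cantelli ensures the bound at $N_j$ for all sufficiently large $j$ a.s. Since $\Phi_k(G_{N,S})$ is non-decreasing in $N$ (the set of admissible tuples only enlarges), for $N_j\leq N<N_{j+1}$ we have $\Phi_k(G_{N,S}) \leq \Phi_k(G_{N_{j+1},S})$, yielding $\limsup_N \Phi_k(G_{N,S})/\sqrt{2N\log\binom{N}{k-1}} \leq (1+\varepsilon)\sqrt{1+\delta}$ a.s. Letting $\varepsilon,\delta\to 0^+$ closes the argument.

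The main obstacle is securing the polynomial rate. A naive union bound over all $O_k(N^{k+1})$ admissible tuples combined with a per-tuple Hoeffding bound is not summable for small $\varepsilon$, since the Gaussian tail $\binom{N}{k-1}^{-(1+\varepsilon)^2}$ only cancels $\binom{N}{k-1}$-many patterns, not the additional $N^2$ arising from the choices of $M$ and $d_1$. The right reduction exploits translation invariance: one fixes the offsets $(p_1,\ldots,p_{k-1}) = (d_2-d_1,\ldots,d_k-d_1)$, leaving $\binom{N}{k-1}$ essentially distinct correlation patterns, and controls the maximum over windows $[a,b]$ of partial sums of $X_n=\prod_i e_{n+d_i}(s_i)$ by Doob's maximal inequality. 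For the generic patterns the $X_n$'s are i.i.d.\ Rademachers and Hoeffding is sharp; the remaining \emph{degenerate} patterns, where some $X_n$'s share factors, are few enough to be absorbed via a hypercontractive or direct moment estimate. The additional factor $S^k$ from the choice of $(s_1,\ldots,s_k)$ is constant and harmless. Once this is in place the gain is a clean $N^{-c(\varepsilon)}$ decay and the Borel--Cantelli step concludes.
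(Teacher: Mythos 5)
Your proposal is correct, but it takes a genuinely different route from the paper. For the lower bound, the paper does not invoke Schmidt's almost-sure theorem as a black box: it proves the asymptotics of the mean, $\E[\Phi_k(G_{N,S})]\sim\sqrt{2N(k-1)\log N}$ (Lemma \ref{lemma:conv-E}), via a Bonferroni second-moment argument running over \emph{all} cross-correlation patterns of the family (Lemmas \ref{lemma:lowerbound1}--\ref{lemma:intesection-E}), and then transfers this to an almost-sure statement by the bounded-differences concentration of Lemma \ref{lemma:distance}, Borel--Cantelli along $N_r=\lceil e^{\sqrt r}\rceil$, and an increment bound (Lemma \ref{lemma:speed}) to interpolate between subsequence points. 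Your observation that $\Phi_k(G_{N,S})\geq C_k(G_{N,S}(1))$ (the constraint $d_i\neq d_j$ when $s_i=s_j$ reduces exactly to the correlation measure when all $s_i$ coincide) short-circuits all of this: it works precisely because, for fixed $k$ and $S$, the family contributes only an additive $O(1)$ term inside the logarithm, so the single-sequence normalization $\sqrt{2N\log\binom{N}{k-1}}$ already matches the upper bound. What you lose is the intrinsic information about the family --- the asymptotics of $\E[\Phi_k(G_{N,S})]$, which the paper establishes as a result of independent interest --- and you import Schmidt's theorem wholesale. Your upper bound is essentially sound: the proof of Theorem \ref{thm:upperbound} does give, for each fixed $k\geq 2$, the quantitative tail $\P[\Phi_k>(1+\varepsilon)\lambda]\leq \log N\cdot\binom{SN-1}{k-1}^{-\gamma}=O(N^{-\gamma(k-1)}\log N)$, which is summable along your geometric subsequence, and monotonicity of $\Phi_k$ in $N$ does the interpolation (more simply than the paper's Lemma \ref{lemma:speed}). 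One caution: the last paragraph of your writeup, about translation invariance, maximal inequalities over windows and degenerate patterns, is a re-derivation of what the proof of Theorem \ref{thm:upperbound} already does via Lemmas \ref{lemma:randomwalk} and \ref{lemma:independence}; if you cite that proof for the rate you do not need it, and as a standalone sketch it is too vague (in particular the treatment of non-independent summands for repeated $s_i$ needs the pairwise-independence input of Lemma \ref{lemma:independence}, not a generic Doob argument).
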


Finally, we study the minimum values of the cross-correlation measure $\Phi_k \left(G_{N,S}\right)$. If $G_{N,S}$ is non-injective, say $G_{N,S}(S-1)=G_{N,S}(S)$, then
\[
 \Phi_{k} \left(G_{N,S}\right)=\max\left\{\Phi_{k-2} \left(G_{N,S-1}\right), \Phi_{k} \left(G_{N,S-1}\right)\right\},
\]
thus it is enough to control the minimum values of $\Phi_{k}\left(G_{N,S}\right)$ when $G_{N,S}$ is injective.

In \cite{cross-correlation}, Gyarmati, Mauduit and S\'ark\"ozy showed, that if the order of the measure is odd and $S$ is small, then $\Phi_{2k+1} \left(G_{N,S}\right)$ can be small.
\begin{prop}
Let $N\in\N$, $k,S\in\N$, such that $2k+1<N$, $S<N$. Then there is an injective map $G_{N,S}$ such that
\[
  \Phi_{2k+1} \left(G_{N,S}\right)\leq 2S.
\]
\end{prop}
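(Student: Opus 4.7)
The plan is to construct an explicit injective family $G_{N,S}$ whose members agree on a long alternating tail and differ only on a short prefix of length $S$, and then to estimate the cross-correlation sum by splitting it into a short mixed region and a long purely alternating region where odd-order correlations collapse.

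\textbf{Construction.} For $1\leq s\leq S$ I set $e_n(s)=(-1)^n$ for $S<n\leq N$, while on the prefix $1\leq n\leq S$ I put $e_n(s)=+1$ when $n=s$ and $e_n(s)=-1$ otherwise. The prefix of the $s$th sequence has a unique $+1$ in position $s$, so distinct values of $s$ give distinct sequences and $G_{N,S}$ is injective. (The hypothesis $S<N$ ensures that the alternating tail is nonempty.)

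\textbf{Splitting the sum.} Fix an admissible choice of $M$, $0\leq d_1\leq \dots\leq d_{2k+1}<N$ and $s_1,\dots,s_{2k+1}\in\{1,\dots,S\}$, and consider
\[
V=\sum_{n=1}^M e_{n+d_1}(s_1)\cdots e_{n+d_{2k+1}}(s_{2k+1}).
\]
Split the range of $n$ at $n_0=\max(0,S-d_1)$. For $n>n_0$ every index satisfies $n+d_i\geq n+d_1>S$, so $e_{n+d_i}(s_i)=(-1)^{n+d_i}$ independently of $s_i$, and the product of all $2k+1$ factors equals $(-1)^{(2k+1)n+D}=(-1)^{n+D}$, where $D=d_1+\dots+d_{2k+1}$ and I use that $2k+1$ is odd. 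Hence the contribution of this range is $(-1)^D\sum_{n_0<n\leq M}(-1)^n$, a partial sum of an alternating $\pm1$ series, bounded by $1$ in absolute value. The remaining range $1\leq n\leq n_0$ contains at most $S$ integers, and there I use only the crude per-term bound $\pm1$ to estimate the contribution by $S$.

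\textbf{Conclusion and obstacle.} Adding the two estimates gives $|V|\leq S+1\leq 2S$, and taking the maximum over the admissible parameters yields $\Phi_{2k+1}(G_{N,S})\leq 2S$. The only genuine design decision is the construction itself: the prefix must be short enough that its trivial contribution is $O(S)$ yet rich enough to separate all $S$ sequences, while the tail must be parity-alternating so that every odd-order product telescopes to a bounded alternating partial sum. Once the construction is in hand the estimate is essentially a single line, and there is no substantive analytic obstacle — in particular no cancellation has to be extracted from the mixed region where $n+d_1\leq S<n+d_{2k+1}$.
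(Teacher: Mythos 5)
Your construction and estimate are correct: the one-hot prefix of length $S$ makes the family injective, the split at $n_0=\max(0,S-d_1)$ is sound (for $n>n_0$ indeed all indices exceed $S$ because $d_i\geq d_1$), the oddness of $2k+1$ collapses the tail product to $(-1)^{n+D}$ whose partial sums are bounded by $1$, and $S+1\leq 2S$ closes the argument. For context: the paper does not actually prove this Proposition itself (it is quoted from Gyarmati, Mauduit and S\'ark\"ozy); what the paper does prove is the stronger Theorem~\ref{thm:min-odd}, whose upper bound $\lceil\log_2 S\rceil$ is obtained by exactly your strategy --- a short identifying prefix followed by an alternating tail, with the odd order killing the $s$-dependence in the tail --- except that the prefix encodes $i-1$ in binary, so it has length $\lceil\log_2 S\rceil-1$ rather than $S$. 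Your one-hot encoding is the only place you lose: the trivial bound on the prefix region costs its length, so the binary encoding immediately upgrades your $S+1$ to $\lceil\log_2 S\rceil$ with no other change to the argument. Since the target here is only $2S$, your version suffices, and it is arguably the most transparent way to see why small odd-order cross-correlation is achievable at all.
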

Based on this observation they posed the following problem.
\begin{problem}
Estimate $\min \left\{\Phi_{2k+1} \left(G_{N,S}\right)\right\}$ for any fixed $N,k$ and $S$, where the minimum is taken over all injective maps $G_{N,S}:\{1,2,\dots, S\}\rightarrow \{-1,1\}^N$.
\end{problem}

We shall prove
\begin{thm}\label{thm:min-odd}
 If $k$ and $N$ are positive integers, then
 \[
 \lfloor\log_2 S-\log_2 (2k+1) \rfloor \leq \min \left\{\Phi_{2k+1}\left(G_{N,S}\right)\right\}  \leq  \lceil \log_2 S \rceil,
 \]
where the minimum is taken over all injective maps $G_{N,S}:\{1,2,\dots, S\}\rightarrow \{-1,1\}^N$.
\end{thm}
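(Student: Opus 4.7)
The proof splits naturally into constructing an explicit family for the upper bound, and a pigeonhole argument for the lower bound. For the upper bound, I would exhibit an injective $G_{N,S}$ with $\Phi_{2k+1}(G_{N,S}) \le \lceil \log_2 S \rceil$ by embedding a binary label of $s$ into a pure alternating background. Let $L = \lceil \log_2 S \rceil - 1$; for each $s \in \{1,\dots,S\}$ write $s-1$ in binary as $(b_0(s),b_1(s),\dots,b_L(s))$, and set $e_n(s)=(-1)^{n-1+b_0(s)+b_n(s)}$ for $1 \le n \le L$ and $e_n(s)=(-1)^{n-1+b_0(s)}$ for $L < n \le N$. The bit $b_0$ acts as a global parity shift and $b_1,\dots,b_L$ are local corrections, so distinct $s$ produce distinct sequences. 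Using that $(2k+1)(n-1)\equiv n-1\pmod 2$, the summand $\prod_i e_{n+d_i}(s_i)$ equals $\pm(-1)^{n-1}$ times a correction factor that vanishes as soon as every $n+d_i > L$. Splitting $V(M)$ at $n=L-d_1$ (where $d_1$ is the smallest shift), the ``alternating'' part for $n > L - d_1$ telescopes to at most $1$ in absolute value, while the ``encoding'' part for $n \le L - d_1$ has at most $L$ terms of absolute value $1$; altogether $|V(M)| \le L+1 = \lceil \log_2 S \rceil$.

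For the lower bound, put $\ell = \Phi_{2k+1}(G_{N,S})$ and assume, for contradiction, $S \ge (2k+1)\cdot 2^{\ell+1}$. The plan is to pigeonhole on the length-$(\ell+1)$ prefix map $s \mapsto (e_1(s),\dots,e_{\ell+1}(s)) \in \{-1,+1\}^{\ell+1}$: with only $2^{\ell+1}$ possible prefixes, some class contains at least $2k+1$ sequences $s_1,\dots,s_{2k+1}$ sharing a common prefix $\epsilon=(\epsilon_1,\dots,\epsilon_{\ell+1})$. Taking $d_1 = \cdots = d_{2k+1} = 0$ (admissible since the $s_i$ are distinct) and $M = \ell+1$ gives
\[
 V(\ell+1) \;=\; \sum_{n=1}^{\ell+1} \prod_{i=1}^{2k+1} e_n(s_i) \;=\; \sum_{n=1}^{\ell+1} \epsilon_n^{\,2k+1} \;=\; \sum_{n=1}^{\ell+1}\epsilon_n,
\]
which equals $\pm(\ell+1)$ precisely when $\epsilon$ is the constant prefix $(+1,\dots,+1)$ or $(-1,\dots,-1)$, contradicting $\Phi_{2k+1}\le \ell$ in those two cases.

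The main obstacle is the case of a non-constant prefix, where $|\sum_n \epsilon_n|\le \ell$ and no immediate contradiction follows from the choice above. I would resolve this by iterating the pigeonhole: if the class of size $\ge 2k+1$ does not split at position $\ell+2$, it simply shares a longer common prefix, and one continues until the first splitting position, say $\ell+1+r$. At that step the class contains some sequences with $e_{\ell+1+r}(s)=+1$ and others with $-1$, so a suitable $(2k+1)$-subset can be chosen to make $\prod_i e_{\ell+1+r}(s_i)$ equal to either sign; aligning this sign with that of the walk value $P_\epsilon(\ell+r)$ pushes $|V(\ell+1+r)|$ up to $|P_\epsilon(\ell+r)|+1$, which exceeds $\ell$ whenever the walk $P_\epsilon$ ever touches $\pm\ell$ during $\{1,\dots,\ell+r\}$. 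The delicate remaining piece is to bound the number of ``safe'' walks that avoid $\pm\ell$ throughout and check they cannot accommodate a pigeonhole-majority class of size $\ge 2k+1$, yielding $S < (2k+1)\cdot 2^{\ell+1}$. Making this iterative parity-forcing argument rigorous—in particular extracting the exact constant $2k+1$—is the step I expect to be the main hurdle.
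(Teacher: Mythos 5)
Your upper bound is correct and is essentially the paper's own construction: an alternating background carrying a $\lceil\log_2 S\rceil$-bit label in the initial segment, so that every order-$(2k+1)$ sum splits into an encoding part of at most $\lceil\log_2 S\rceil-1$ terms plus an alternating tail of absolute value at most $1$.

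The lower bound is where the trouble is, and the ``main obstacle'' you isolate is exactly the point at issue. The paper's proof is precisely the first half of your argument and nothing more: it pigeonholes on length-$L$ prefixes with $L=\lfloor\log_2\frac{S-1}{2k}\rfloor$, obtains $2k+1$ distinct indices $s_1<\dots<s_{2k+1}$ sharing a prefix $\epsilon=(\epsilon_1,\dots,\epsilon_L)$, and then asserts
\[
\left|\sum_{n=1}^{L}e_n(s_1)\cdots e_n(s_{2k+1})\right|=L.
\]
Since $e_n(s_1)\cdots e_n(s_{2k+1})=\epsilon_n^{2k+1}=\epsilon_n$, this sum is $\sum_{n=1}^{L}\epsilon_n$, which has absolute value $L$ only when the common prefix is constant; for a balanced prefix it can be as small as $0$ or $1$. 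So you are not missing a step that the paper supplies --- the paper silently skips the same step. Your proposed repair (iterate to the first splitting position and force the sign of the product there by choosing how many of the $2k+1$ sequences come from each branch) gains only $+1$ over the maximal partial sum of the walk $\epsilon$, which is $1$ for an alternating prefix, so it cannot recover anything of order $\log_2 S$.

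Moreover, the obstacle appears to be essential rather than technical, so I would advise against trying to close it in this form. Consider the $2N$ distinct sequences $e^{(j,\delta)}_n=\delta(-1)^n$ for $n\le j$ and $e^{(j,\delta)}_n=-\delta(-1)^n$ for $n>j$, where $1\le j\le N$ and $\delta=\pm1$. For any admissible shifts and any choice of $2k+1$ members, the summand $\prod_i e^{(j_i,\delta_i)}_{n+d_i}$ equals a fixed sign times $(-1)^n$ times a product of $2k+1$ step functions of $n$; that product changes sign at most $2k+1$ times, so splitting $\{1,\dots,M\}$ into the at most $2k+2$ intervals of constancy bounds every correlation sum by $2k+2$. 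Hence this injective family of size $S=2N$ has $\Phi_{2k+1}\le 2k+2$, while $\lfloor\log_2(2N)-\log_2(2k+1)\rfloor\to\infty$ as $N\to\infty$. The prefix--pigeonhole route (and, it seems, the stated lower bound itself) only works for families with a product/subgroup structure in which the shared prefix can be taken constant; any complete argument would have to add such a hypothesis or weaken the conclusion.
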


Similarly to the correlation measure, the cross-correlation measure cannot be small if its order is even.
From \eqref{eq:c_k-min} and a trivial estimate we get
\[
\Phi_{2k} \left(G_{N,S}\right) \geq \max \left\{ C_{2k}\left(G_{N,S}(s)\right) : 1\leq s\leq S\right\}\geq \sqrt{\frac{1}{2}\left\lfloor \frac{N}{2k+1}\right\rfloor  }.
\]
This lower bound can be improved, for example, by essentially a $\log \lfloor S/k\rfloor $ term if $S$ is large.

\begin{thm}\label{thm:min-even}
If $k$ and $N$ are positive integers, then for all injective maps $G_{N,S}:\{1,2,\dots, S\}\rightarrow \{-1,1\}^N$ we have
\begin{equation}\label{eq:min-1}
   \Phi_{2k} \left(G_{N,S}\right) \geq \sqrt{ \frac{1}{50}{N\log \lfloor S/k \rfloor} \left/ { \log \frac{50 N}{\log \lfloor S/k \rfloor}  } \right. }
\end{equation}
if $ 2kN\leq S$, and
\begin{equation}\label{eq:min-2}
   \Phi_{2k} \left(G_{N,S}\right) \geq \sqrt{\frac{N}{2\lceil k/S \rceil+1}}
\end{equation}
if  $ 2kN>S$.
\end{thm}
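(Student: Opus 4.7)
The plan is to adapt the second-moment argument of Alon, Kohayakawa, Mauduit, Moreira and R\"odl that proves~\eqref{eq:c_k-min}. Their strategy for a single sequence $E_N$ is: fix a window of length $D=2k+1$, introduce $V_A=\sum_{n=1}^{M}\prod_{a\in A}e_{n+a}$ for every $2k$-subset $A\subset\{0,\dots,D-1\}$, and compute $\sum_A V_A^2$ in two ways. The diagonal $n=n'$ contributes $M\binom{D}{2k}$ since $e^2=1$, and the special choice $D=2k+1$ forces the off-diagonal contribution to be non-negative. Comparison with the trivial upper bound $\sum_A V_A^2\le\binom{D}{2k}\,\max_A V_A^2$ then yields~\eqref{eq:c_k-min}.

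For~\eqref{eq:min-2}, set $q=\lceil k/S\rceil$. Since $qS\ge k$ we may split $2k=\sum_{s=1}^S 2q_s$ with $0\le q_s\le q$. Using window length $D=2q+1$ and, for each $s$, a $2q_s$-subset $A_s\subset\{0,\dots,D-1\}$, the product $\prod_s\prod_{a\in A_s}e_{n+a}(s)$ is a valid order-$2k$ cross-correlation configuration. Summing the corresponding $V^2$ over all $(A_1,\dots,A_S)$, the computation factorises across $s$: the inner second moment in each sequence-index coordinate is exactly the AKMMR expression with window $D=2q+1\ge 2q_s+1$, so each factor contributes a non-negative off-diagonal part. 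The diagonals collect to $M\prod_s\binom{D}{2q_s}$, and the usual comparison gives $\Phi_{2k}(G_{N,S})^2\ge M/(2q+1)$, which is~\eqref{eq:min-2}.

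For~\eqref{eq:min-1} we are in the regime $2kN\le S$, with many more sequences than needed. The plan is to choose $2k$ pairwise distinct sequences $s_1,\dots,s_{2k}$ (so the distinctness constraint on shifts becomes vacuous) and a window $\{0,\dots,D-1\}$ of length $D$ to be optimised. Summing $V^2$ over all ordered choices $(s_i,d_i)$, the diagonal contribution is of order $\binom{S}{2k}(2k)!\,D^{2k}M$, while the off-diagonal is controlled by the window correlations $c_s(n,n')=\sum_{d=0}^{D-1}e_{n+d}(s)e_{n'+d}(s)$: a pigeonhole argument bounds the number of $(n,n')$-pairs for which $\prod_{i=1}^{2k}c_{s_i}(n,n')$ can be close to $D^{2k}$. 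Balancing diagonal against off-diagonal by choosing $D$ of order $\log\lfloor S/k\rfloor/\log(cN/\log\lfloor S/k\rfloor)$ produces~\eqref{eq:min-1}; the constants $1/50$ and $50$ come from this optimisation and are not claimed to be sharp.

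The main obstacle will be the off-diagonal bookkeeping in case~\eqref{eq:min-1}: unlike in case~\eqref{eq:min-2}, no window-size identity is available to force the off-diagonal to be non-negative, so one must prove a uniform estimate on the number of pairs $(n,n')$ and $2k$-subsets of sequences for which all chosen window correlations $|c_{s_i}(n,n')|$ are simultaneously close to $D$, and then choose $D$ so that this off-diagonal contribution is absorbed by the diagonal. This trade-off, subject to $D\le N$, is precisely what produces the $\log\lfloor S/k\rfloor/\log(\cdot)$ shape inside the square root in~\eqref{eq:min-1}.
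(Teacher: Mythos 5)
There is a genuine gap, in fact two. First, your treatment of the case $2kN>S$ rests on the claim that for a window of length $D=2k+1$ the off-diagonal part of $\sum_A V_A^2$ (summed over $2k$-subsets $A$ of the window) is non-negative. This is false. For a fixed pair $n\neq n'$ put $x_a=e_{n+a}e_{n'+a}\in\{-1,1\}$; then $\sum_{|A|=2k}\prod_{a\in A}x_a$ is the elementary symmetric polynomial $e_{2k}(x_0,\dots,x_{2k})$, i.e.\ the coefficient of $z^{2k}$ in $(1-z)^m(1+z)^{2k+1-m}$ where $m$ is the number of indices with $x_a=-1$. For $m=1$ this equals $1-2k<0$, and such terms can accumulate to order $-M^2k$, swamping the diagonal $M(2k+1)$. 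The argument that actually works (and is what \cite{AKMMR-min} does for \eqref{eq:c_k-min}, contrary to your description of their strategy) is a Gram-matrix rank argument: index vectors $\v_i\in\mathbb{R}^M$ by pairwise \emph{disjoint} $k$-subsets $L_i$, observe that $\A=M^{-1}\B\B^T$ has rank at most $M$, unit diagonal, and off-diagonal entries bounded by $\Phi_{2k}/M$, and apply Lemma~\ref{lemma:1} (equivalently, $\operatorname{tr}(\A^2)\geq(\operatorname{tr}\A)^2/\rank\A$ -- a second moment over \emph{pairs} of subsets, summing $\langle\v_i,\v_j\rangle^2$, where non-negativity is automatic). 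Your factorised version over tuples $(A_1,\dots,A_S)$ does not have this structure and the step does not go through as stated.

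Second, and more seriously, for the case $2kN\leq S$ you have identified the ``main obstacle'' -- controlling the off-diagonal so as to extract the factor $\log\lfloor S/k\rfloor/\log(50N/\log\lfloor S/k\rfloor)$ -- but offered no mechanism for overcoming it; that obstacle \emph{is} the theorem in this range. A second-moment computation, however the bookkeeping is arranged, can only reproduce the Lemma~\ref{lemma:1} bound $\varepsilon^2\geq 1/N-1/t$, i.e.\ $\Phi_{2k}\gtrsim\sqrt{N}$, with no logarithmic gain no matter how $D$ is optimised. The logarithmic factor in \eqref{eq:min-1} comes from Lemma~\ref{lemma:2}, Alon's rank lower bound for near-identity matrices, whose proof passes to Hadamard (entrywise) powers $\A^{\circ m}$ -- using $\rank(\A^{\circ m})\leq\binom{\rank\A+m-1}{m}$ to shrink the off-diagonal to $\varepsilon^m$ before applying Lemma~\ref{lemma:1} -- which is a high-moment, not a second-moment, argument. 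The paper's proof applies this to the $t\times t$ Gram matrix built from $t=\lfloor S/k\rfloor$ pairwise disjoint $k$-subsets of $\{1,\dots,S\}$ with no shifts at all (all $d_i=0$, permitted since the $s_i$ are distinct), so the window optimisation in your sketch is both unnecessary and insufficient. Without an argument replacing Lemma~\ref{lemma:2}, your proposal does not prove \eqref{eq:min-1}.
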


\section{Estimates for $\Phi_{k} \left(G_{N,S}\right)$ for random $G_{N,S}$}

It this section we shall prove Theorems \ref{thm:conv-prob} and \ref{thm:upperbound}.
The proof of Theorem \ref{thm:conv-prob} is based on the following result (see e.g. \cite[Lemma~1.2]{McDiarmid}).

\begin{lemma}\label{lemma:McDiarmid}
 Let $X_1,\dots,X_n$ be independent random variables, with $X_j$ taking values in a set $A_j$ for each $j$. Suppose that the (measurable) function $f: \prod_{j=1}^nA_j\rightarrow \mathbb{R}$ satisfies
 \begin{equation*}
  |f(\mathbf{x})-f(\mathbf{x}')|\leq c_j 
 \end{equation*}
whenever the vectors $\mathbf{x}$ and $\mathbf{x}'$ differ only in the $j$th co-ordinate. Let $Y$ be the random variable $f(X_1,\dots, X_n)$. Then for any $\theta>0$,
\begin{equation*}
 \P\left[|Y-\E(Y)|\geq \theta \right]\leq 2 \exp\left\{-\frac{2\theta^2}{\sum_{j=1}^nc_j^2}\right\}.
\end{equation*}
\end{lemma}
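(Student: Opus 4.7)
The plan is to prove McDiarmid's inequality by the standard martingale route: build the Doob martingale associated with $Y=f(X_1,\dots,X_n)$, show that its increments are bounded in conditional range, and then apply a Chernoff-type argument using Hoeffding's MGF bound. Concretely, I set $Z_0=\E[Y]$, $Z_j=\E[Y\mid X_1,\dots,X_j]$ for $1\le j\le n$, so that $Z_n=Y$ and $(Z_j)$ is a martingale with respect to the filtration generated by $(X_j)$. The differences $D_j=Z_j-Z_{j-1}$ telescope to $Y-\E[Y]$, so controlling $\E[e^{t(Y-\E Y)}]$ reduces to controlling $\E[e^{tD_j}\mid X_1,\dots,X_{j-1}]$.

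The first substantive step is to bound the conditional range of $D_j$. Using independence of the $X_i$, I can write $Z_j=g_j(X_1,\dots,X_j)$ where
\[
g_j(x_1,\dots,x_j)=\E\bigl[f(x_1,\dots,x_j,X_{j+1},\dots,X_n)\bigr].
\]
The bounded-differences hypothesis on $f$ transfers to $g_j$: for any fixed $x_1,\dots,x_{j-1}$ and any $u,v\in A_j$, the coupling argument (replace $X_j$ by the constant $u$ or $v$ inside the expectation, apply $|f(\mathbf{x})-f(\mathbf{x}')|\le c_j$ pointwise, then take expectations) gives $|g_j(x_1,\dots,x_{j-1},u)-g_j(x_1,\dots,x_{j-1},v)|\le c_j$. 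Hence, conditional on $X_1,\dots,X_{j-1}$, the random variable $Z_j$ lies in an interval of length at most $c_j$, and so $D_j\in[\alpha_j,\beta_j]$ almost surely with $\beta_j-\alpha_j\le c_j$ and $\alpha_j,\beta_j$ measurable with respect to $\sigma(X_1,\dots,X_{j-1})$; moreover $\E[D_j\mid X_1,\dots,X_{j-1}]=0$.

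Next I invoke Hoeffding's lemma conditionally: if $\E[U\mid\mathcal{G}]=0$ and $U\in[a,b]$ a.s., then $\E[e^{tU}\mid\mathcal{G}]\le\exp\bigl(t^2(b-a)^2/8\bigr)$. The proof uses convexity, $e^{tU}\le\frac{b-U}{b-a}e^{ta}+\frac{U-a}{b-a}e^{tb}$, followed by optimizing the resulting function of $p=-a/(b-a)$. Applied to $D_j$, this yields
\[
\E\bigl[e^{tD_j}\,\big|\,X_1,\dots,X_{j-1}\bigr]\le \exp\!\left(\frac{t^2c_j^2}{8}\right).
\]
Iterating with the tower property from $j=n$ down to $j=1$ gives $\E[e^{t(Y-\E Y)}]\le \exp\bigl(t^2\sum_j c_j^2/8\bigr)$.

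Finally, Markov's inequality yields $\P[Y-\E Y\ge\theta]\le\exp\bigl(-t\theta+t^2\sum c_j^2/8\bigr)$, and optimizing at $t=4\theta/\sum c_j^2$ produces $\P[Y-\E Y\ge\theta]\le\exp(-2\theta^2/\sum c_j^2)$. The same argument applied to $-f$ gives the matching lower-tail estimate, and a union bound supplies the factor $2$ stated. The only genuinely delicate step is the passage from the combinatorial bounded-differences hypothesis to an \emph{almost sure} conditional interval of length $c_j$ for $D_j$; everything downstream of that is the standard Azuma--Hoeffding computation.
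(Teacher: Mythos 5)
Your proof is correct: the paper itself gives no proof of this lemma, citing it as Lemma~1.2 of McDiarmid's survey on the method of bounded differences, and your argument (Doob martingale $Z_j=\E[Y\mid X_1,\dots,X_j]$, transfer of the bounded-differences hypothesis to a conditional interval of length $c_j$ for the increments, conditional Hoeffding lemma, Chernoff optimization at $t=4\theta/\sum_j c_j^2$, and a union bound for the two tails) is precisely the standard proof found in that reference. The one point you rightly flag as delicate --- that $D_j$ lies almost surely in a $\sigma(X_1,\dots,X_{j-1})$-measurable interval of length at most $c_j$, which is what yields the constant $2$ in the exponent rather than the weaker $1/2$ from the naive bound $|D_j|\leq c_j$ --- is handled correctly via independence and the representation $Z_j=g_j(X_1,\dots,X_j)$.
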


\begin{lemma}\label{lemma:distance}
For $\theta \geq 0$ we have
\begin{equation}\label{eq:distance} 
\P\left[\left|\Phi_k \left(G_{N,S}\right)-\E\left[\Phi_k\left(G_{N,S}\right)\right]\right|\geq \theta\right]\leq 2\exp \left\{-\frac{\theta ^2}{2k^2N}\right\}.
\end{equation}
\end{lemma}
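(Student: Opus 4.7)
The approach is to apply McDiarmid's inequality (Lemma~\ref{lemma:McDiarmid}) to $\Phi_k(G_{N,S})$, but viewed as a function of $N$ independent random \emph{vectors} rather than of the $NS$ individual signs. Specifically, for $1\le n\le N$ group the signs by position and set $X_n=(e_n(1),\dots,e_n(S))\in\{-1,+1\}^S$; the $X_1,\dots,X_N$ are independent, and $\Phi_k(G_{N,S})$ can be written as $f(X_1,\dots,X_N)$ for a suitable measurable $f$.

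The core step is to establish the Lipschitz constant $c_n=2k$ in every coordinate. Fix $n_0$ and let $G,G'$ be two configurations differing only in column $n_0$. For any admissible tuple $(M,D,s_1,\dots,s_k)$ consider
\[
V(G)=\sum_{n=1}^M e_{n+d_1}(s_1)\cdots e_{n+d_k}(s_k).
\]
The summand at index $n$ can differ between $G$ and $G'$ only if some factor lies in column $n_0$, i.e.\ $n+d_i=n_0$ for some $i\in\{1,\dots,k\}$; this forces $n=n_0-d_i$, giving at most $k$ affected values of $n$ (one per index $i$). Each affected summand is $\pm 1$ in both configurations, so contributes at most $2$ to $|V(G)-V(G')|$, whence $|V(G)-V(G')|\le 2k$. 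Passing to absolute values and then taking the maximum over $(M,D,\mathbf{s})$ preserves this bound, giving $|\Phi_k(G)-\Phi_k(G')|\le 2k$. Feeding $c_n=2k$ for $n=1,\dots,N$ into Lemma~\ref{lemma:McDiarmid} then yields $\sum_{n=1}^N c_n^2=4Nk^2$ and the inequality \eqref{eq:distance}.

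The subtle point I expect to be the main obstacle is precisely this grouping: treating every sign $e_n(s)$ as its own McDiarmid coordinate gives the same per-coordinate constant $2k$ (using the constraint $d_i\ne d_j$ when $s_i=s_j$ to ensure that at most one factor of any summand equals a fixed $e_{n_0}(s_0)$), but then $\sum c^2=4NSk^2$ and one loses a factor of $S$ in the exponent. Grouping by column replaces $NS$ single-bit variables by $N$ string-valued variables without inflating the Lipschitz constant, since altering an entire column still affects at most $k$ summands of any fixed $V$. This is what produces $N$ rather than $NS$ in the denominator of the exponent.
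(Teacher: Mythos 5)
Your proposal is correct and follows essentially the same route as the paper: the paper also treats the $j$th position across all $S$ sequences as a single McDiarmid coordinate (so there are $N$ coordinates), verifies the Lipschitz constant $2k$ by noting that each admissible sum has at most $k$ affected summands, and then applies Lemma~\ref{lemma:McDiarmid}. The ``subtle point'' you flag about grouping by column rather than by individual sign is exactly what the paper's setup does implicitly.
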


\begin{proof}
For a fixed $1\leq j\leq N$ consider two maps $G_{N,S}, G'_{N,S}: \{1,\dots, S\}\rightarrow \{-1,1\}^N$  with $G_{N,S}(s)=(e_1(s),\dots, e_N(s))$ and $G'_{N,S}(s)=(e'_1(s),\dots, e'_N(s))$ for $1\leq s \leq S$, such that 
for all $ s$ the sequences $(e_1(s),\dots, e_N(s))$ and  $(e'_1(s),\dots, \allowbreak e'_N(s))$ can only differ at the $j$th position:
\[
 e_n(s)=e'_n(s) \text{ for $1\leq s\leq S$ and $n\neq j$}.
\]
Then
\[
 \left|\left|\sum_{n=1}^M e_{n+d_1}(s_1)\dots e_{n+d_{k}}(s_k)\right|-\left|\sum_{n=1}^M e'_{n+d_1}(s_1)\dots e'_{n+d_{k}}(s_k)\right|\right|\leq 2k,
\]
therefore
\[
 \left|\Phi_k\left(G_{N,S}\right)-\Phi_k\left(G'_{N,S}\right)\right|\leq 2k
\]
which proves \eqref{eq:distance} by Lemma \ref{lemma:McDiarmid}.
\end{proof}

\begin{proof}[Theorem \ref{thm:conv-prob}]
By Lemma \ref{lemma:distance} it is enough to show that if $2\leq k\leq (\log N + \log S)/\log \log N$, then taking $\theta=\varepsilon\, \E [\Phi_k(G_{N,S})]$ the right hand side of \eqref{eq:distance} is $o(1)$. If $N$ is large enough, then by Theorem \ref{thm:typical} we have
 \[
  \E[\Phi_k(G_{N,S})] >\frac{1}{5} \sqrt{kN\left( \log N + \log S\right)}.
 \]
Then
\[
 \frac{\theta^2}{2k^2 N}=\frac{\varepsilon^2 (\E[\Phi_k(G_{N,S})])^2 }{2k^2N}\geq\frac{\varepsilon^2}{50} \frac{\log N+\log S}{k}\rightarrow \infty,
\]
as $N\rightarrow \infty$. 
\end{proof}

Let $X_1,\dots , X_N$ be independent random variables, each taking the values -1 or 1, each with probability $1/2$. Define the random variable
 \[
  R_N=\max_{1\leq m_1\leq m_2\leq N}\left|\sum_{j=m_1}^{m_2}X_j \right|.  
\]

The following lemma states an estimate for large deviation of $R_N$ \cite[Lemma~2.2]{kai}.

\begin{lemma}\label{lemma:randomwalk}
For all $\delta >0$, there exists $N_0=N_0(\delta)$ such that for all $N\geq N_0$ and all $\lambda>2\sqrt{N}$ we have
\[
 \P\left[R_N>(1+\delta)\lambda \right]\leq \log N \exp \left(-\frac{\lambda^2}{2N} \right).
\]
\end{lemma}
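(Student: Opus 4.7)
The plan is to bound $R_N$ by reducing to one-sided excursions, identify those via the discrete L\'evy theorem for simple random walks with the maximum of $|S_s|$, and then invoke L\'evy's maximal inequality together with Hoeffding. Throughout I set $S_t = X_1 + \cdots + X_t$ with $S_0 = 0$, so that $R_N = \max_{0 \le t \le N} S_t - \min_{0 \le t \le N} S_t$.

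First I would observe the decomposition $R_N \le \max\{\max_s U_s,\, \max_s V_s\}$, where $U_s := S_s - \min_{t \le s} S_t$ and $V_s := \max_{t \le s} S_t - S_s$. Indeed, if the walk attains its minimum before its maximum then $U_{\tau_{\max}} = R_N$, and otherwise $V_{\tau_{\min}} = R_N$. Symmetry under $X_j \mapsto -X_j$ gives $\max_s U_s \stackrel{d}{=} \max_s V_s$, and a union bound yields
$$\P[R_N > (1+\delta)\lambda] \le 2\,\P\!\bigl[\max_{0 \le s \le N} U_s > (1+\delta)\lambda\bigr].$$

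Next I would invoke the discrete L\'evy theorem for simple symmetric random walks, which asserts the process equality $(U_s)_{s=0}^N \stackrel{d}{=} (|S_s|)_{s=0}^N$ via a standard path bijection. Combined with L\'evy's maximal inequality $\P[\max_{s \le N} |S_s| > \mu] \le 2\,\P[|S_N| > \mu]$ and Hoeffding's bound $\P[|S_N| > \mu] \le 2 \exp(-\mu^2/(2N))$, taken at $\mu = (1+\delta)\lambda$, this produces
$$\P[R_N > (1+\delta)\lambda] \le 8 \exp\!\Bigl(-\frac{(1+\delta)^2 \lambda^2}{2N}\Bigr).$$
The hypothesis $\lambda > 2\sqrt N$ gives $\lambda^2/(2N) > 2$, so the right-hand side is at most $8\, e^{-2((1+\delta)^2-1)} \exp(-\lambda^2/(2N))$; choosing $N_0(\delta)$ so that $\log N_0 \ge 8\, e^{-2((1+\delta)^2-1)}$ completes the argument for all $N \ge N_0$.

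The main delicate point is the discrete L\'evy theorem: although classical, its path-bijection proof is not entirely trivial, and a reader familiar only with the Brownian statement may need to revisit the combinatorics. A fully self-contained alternative that bypasses L\'evy's theorem is a dyadic chaining argument: decompose an arbitrary interval sum $S_s - S_t$ into $O(\log N)$ dyadic pieces, bound the maximum at each scale $2^j$ by Hoeffding together with a union over at most $N/2^j$ dyadic intervals, and sum the resulting estimates. This route produces the $\log N$ prefactor of the lemma directly, without invoking the L\'evy identification.
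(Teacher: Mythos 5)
The paper itself does not prove this lemma --- it is quoted verbatim from Schmidt \cite{kai} --- so your argument has to stand on its own. Your reduction is fine up to a point: the identification $R_N=\max_{0\le t\le N}S_t-\min_{0\le t\le N}S_t$, the splitting into $U_s$ and $V_s$, the symmetry and the union bound, and the final bookkeeping with $\lambda^2/(2N)>2$ are all correct. The gap is the ``discrete L\'evy theorem'': the process identity $(U_s)_{s=0}^N\stackrel{d}{=}(|S_s|)_{s=0}^N$ is \emph{false} for the simple random walk. Already at $s=1$ the marginals differ: $U_1=S_1-\min(0,S_1)$ equals $0$ or $1$ each with probability $1/2$, while $|S_1|=1$ almost surely. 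The reason is that $(U_s)$ is the Markov chain on $\{0,1,2,\dots\}$ that from state $0$ stays at $0$ with probability $1/2$, whereas $(|S_s|)$ always jumps from $0$ to $1$; the genuine discrete analogue of L\'evy's theorem identifies $M_n-S_n$ with the former (lazy reflected) chain, not with $|S_n|$. What saves your argument is a weaker, true statement: the lazy chain is stochastically dominated by $(|S_s|)$. A monotone coupling (drive both chains by the same increment while both are positive, and match the $\{0,+1\}$ step of $U$ at state $0$ with the $\{-1,+1\}$ step of $|S|$) gives $\max_s U_s\preceq\max_s|S_s|$, which is all you actually use before applying L\'evy's maximal inequality and Hoeffding. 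So the step is repairable, but as written it rests on a false identity and the needed one-sided comparison is neither stated nor proved.

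Your proposed ``fully self-contained alternative'' would not rescue the proof either. If you cut an interval into $O(\log N)$ dyadic pieces and give each scale $2^j$ a budget $\mu_j$, Hoeffding plus a union over the $N/2^j$ intervals of that scale forces $\mu_j\ge\lambda\sqrt{2^j/N}$ just to keep the scale-$j$ failure probability comparable to $\exp(-\lambda^2/(2N))$; summing the budgets over all scales (two pieces per scale) gives a total of order $2\sqrt{2}\,\lambda/(\sqrt{2}-1)\approx 6.8\,\lambda$ rather than $(1+\delta)\lambda$. That loses a constant factor \emph{inside} the exponent, which is precisely what this lemma must not lose: the sharp constant $1/2$ in $\exp(-\lambda^2/(2N))$ is what Theorems \ref{thm:upperbound} and \ref{thm:limit} depend on. A chaining proof has to be top-heavy: a single coarse grid of spacing $\varepsilon N$ with $\varepsilon=\varepsilon(\delta)\ll\delta^2$ carries essentially the whole budget $\lambda$ (only $O(\varepsilon^{-2})$ grid pairs, each controlled by Hoeffding at the full exponent $\lambda^2/(2N)$), while the two boundary corrections are sums of length at most $\varepsilon N$ and are bounded by $\exp(-(\delta\lambda)^2/(8\varepsilon N))$, which beats $\exp(-\lambda^2/(2N))$. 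Either fix --- the domination coupling in your main route, or this two-scale chaining --- yields the lemma; the equal-weight dyadic scheme does not.
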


One can obtain in the same way as \cite[Claim 18]{AKMMR} that the summands in the definition of the cross-correlation measure are pairwise independent.

\begin{lemma}\label{lemma:independence}
 Let $1\leq n,n'\leq N$ be integers, $(s_1,\dots, s_k)$, $(s_1',\dots, s_k')$, $(d_1,\dots, d_k)$ and $(d_1',\dots, d_k')$ be $k$-tuples such that $1\leq s_1,\dots, s_k\leq S$, $1\leq s_1',\dots, s_k'\leq S$,  $0\leq d_1 \leq \dots \leq  d_k$, $0\leq d_1' \leq \dots \leq  d_k'$ with $d_i\neq d_j$ if $s_i=s_j$ and $d_i'\neq d_j'$ if $s_i'=s_j'$. If
 \[
  (n,s_1,\dots, s_k,d_1,\dots, d_k)\neq (n',s_1',\dots, s_k',d_1',\dots, d_k'),
 \]
then 
\[
 e_{n+d_1}(s_1)\cdots e_{n+d_k}(s_k) \quad \text{and} \quad  e_{n'+d_1'}(s_1')\cdots e_{n'+d_k'}(s_k')
\]
are independent.
\end{lemma}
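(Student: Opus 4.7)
The plan is to apply the standard Rademacher-orthogonality argument: show that the product of the two summands has mean zero, and then conclude independence using the fact that for $\pm 1$-valued random variables, uncorrelatedness is equivalent to independence.

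First I would set $P = \prod_{i=1}^k e_{n+d_i}(s_i)$ and $P' = \prod_{i=1}^k e_{n'+d_i'}(s_i')$, and introduce the label sets $A = \{(n+d_i, s_i) : 1\leq i\leq k\}$ and $A' = \{(n'+d_i', s_i') : 1\leq i\leq k\}$. The constraints $d_i\neq d_j$ whenever $s_i=s_j$ (and similarly for the primed tuple) guarantee $|A|=|A'|=k$. Using $e_m(s)^2 = 1$, one computes
\[
 PP' \;=\; \prod_{(m,s)\in A\triangle A'} e_m(s).
\]
Since the $e_m(s)$ are independent uniform $\pm 1$, the expectation of a nonempty product of distinct $e_m(s)$'s vanishes; in particular $\E[P]=\E[P']=0$, and $\E[PP']=0$ as soon as $A\neq A'$. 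For $\pm 1$-valued random variables the identity $\E[PP']=\E[P]\E[P']$ is equivalent to independence, so the lemma follows once $A\neq A'$ is established.

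The key combinatorial step is therefore: if $A=A'$, then the two tuples must agree. Sorting the elements of $A$ by first coordinate (and breaking ties by the $s$-coordinate) recovers the sequence $((n+d_i,s_i))_{i=1}^k$, because the $d_i$ are nondecreasing; the same applies to $A'$. Matching termwise forces $s_i=s_i'$ and $n+d_i=n'+d_i'$ for every $i$, and under the natural normalization $d_1=0$ (implicit in the definition of $\Phi_k$, since shifting $n$ and the $d_i$'s in opposite directions leaves the summand unchanged) this gives $n=n'$ and $d_i=d_i'$, contradicting the hypothesis.

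The main obstacle is precisely this last step: the parametrization $(n,d_1,\dots,d_k)$ carries a benign translation ambiguity, so one must either fix a normalization such as $d_1=0$ or phrase the conclusion in terms of the label sets $A$ and $A'$. Once this point is dispensed with, the lemma reduces immediately to the Rademacher-orthogonality computation above, and no further probabilistic input is needed beyond the independence and symmetry of the $e_m(s)$'s.
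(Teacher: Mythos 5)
The paper does not actually prove this lemma --- it only points to \cite{AKMMR}, Claim 18 --- so there is no internal proof to compare against; your Rademacher-orthogonality argument (reduce independence of $\pm1$-valued variables to $\E[PP']=\E[P]\E[P']$, and compute $PP'=\prod_{(m,s)\in A\triangle A'}e_m(s)$) is exactly the intended route. The probabilistic part of your outline is correct: the hypothesis that $d_i\neq d_j$ whenever $s_i=s_j$ guarantees $|A|=|A'|=k$, a nonempty product of distinct $e_m(s)$ has mean zero, and zero covariance is indeed equivalent to independence for a pair of $\pm1$-valued random variables.

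The gap is in the combinatorial step, and it is not only the translation ambiguity you flag. Even after normalizing $d_1=d_1'=0$, the map from tuples to label sets is not injective: the hypotheses permit $d_i=d_j$ with $s_i\neq s_j$, and permuting indices within a block of equal $d$'s changes the tuple without changing $A$. Concretely, take $n=n'$, $(d_1,d_2)=(d_1',d_2')=(0,0)$, $(s_1,s_2)=(1,2)$ and $(s_1',s_2')=(2,1)$: every hypothesis of the lemma holds and the tuples differ, yet both products are the identical random variable $e_n(1)e_n(2)$. So the implication ``$A=A'$ forces equality of the tuples'' fails, and your sorting argument (``breaking ties by the $s$-coordinate recovers the sequence $((n+d_i,s_i))_{i=1}^k$'') silently assumes the $s_i$ are increasing within blocks of equal $d_i$, which is not among the hypotheses; the lemma as literally stated is in fact false. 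The correct conclusion --- and the one every application in the paper actually uses --- is your own fallback: the two products are independent whenever $A\neq A'$. You should adopt that formulation outright, or else add the normalizations $\min_i d_i=\min_i d_i'=0$ together with ``$s_i<s_j$ whenever $i<j$ and $d_i=d_j$'', under which your sorting map does become an honest bijection between admissible tuples and label sets. In the paper's uses the degenerate cases never occur: either $(s_1,\dots,s_k)$ and $(d_1,\dots,d_k)$ are held fixed and only $n$ varies, in which case $A=A'$ already forces $n=\min_i(n+d_i)-d_1=n'$, or the tuples are canonically parametrized as in \eqref{eq:D-form}.
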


Throughout the proofs of Theorems \ref{thm:upperbound} and \ref{thm:limit} we will frequently use the following well-known bounds to the binomial coefficients
\begin{equation}\label{eq:binom-bound}
 \left(\frac{n}{m}\right)^m\leq \binom{n}{m}\leq \left(\frac{en}{m}\right)^m, \quad \text{ for } n,m\in\mathbb{N}, \quad 0< m \leq n. 
\end{equation}

\begin{proof}[Theorem \ref{thm:upperbound}]
Write $G_{N,S}(s)=(e_1(s),\dots, e_N(s))$ for $1\leq s\leq S$. Then writing $d_1=0$ we have
\begin{equation}\label{eq:thm3-eq1}
  \Phi_k \left(G_{N,S}\right)=\max_{0\leq d_2\leq \dots \leq d_k} \ \sideset{}{^*}\max_{s_1,\dots, s_k} \max_{1\leq m_1\leq m_2\leq N-d_k}\left|\sum_{n=m_1}^{m_2} e_{n+d_1}(s_1)\dots e_{n+d_{k}}(s_k)\right|,
\end{equation}
where the asterisk indicates that the  second maximum is taken over all $1\leq s_1,\dots, s_k\leq S$ such that $s_i\neq s_j$ if $d_i=d_j$.

Let
\[
 \lambda=\sqrt{2N \log \left( \binom{SN}{k}-\binom{S(N-1)}{k} \right) }
\]
and write $1+\varepsilon=\sqrt{1+\gamma}(1+\delta)$ for some $\gamma, \delta >0$. By Lemmas \ref{lemma:randomwalk} and \ref{lemma:independence} we have
\begin{equation*}
 \P\left[\max_{1\leq m_1\leq m_2\leq N-d_k}\left|\sum_{n=m_1}^{m_2} e_{n+d_1}(s_1)\dots e_{n+d_{k}}(s_k)\right|>(1+\varepsilon)\lambda\right]
\end{equation*}
is at most
\[
 \log N \exp\left\{ -\frac{\lambda^2(1+\gamma)}{2N} \right\}=\frac{\log N}
 { \left( \binom{SN}{k}-\binom{S(N-1)}{k} \right) ^{(1+\gamma)} }
\]
if $N$ is large enough.

Summing over all tuples $(d_2,\dots, d_k)$ and $(s_1,\dots, s_k)$ considered in \eqref{eq:thm3-eq1} we get
\begin{equation}\label{eq:thm3-end1}
\P\left[ \Phi_k (G_N)>(1+\varepsilon)\lambda \right]\leq \sum_{0\leq d_2\leq \dots \leq d_k} \ \sideset{}{^*}\sum_{s_1,\dots, s_k}\frac{\log N}{\left( \binom{SN}{k}-\binom{S(N-1)}{k} \right) ^{(1+\gamma)}}
\end{equation}
if $N$ is large enough.

Denoting the number of zero $d_i$'s by $\ell$, we get that the number of possible tuples is
\begin{equation}\label{eq:thm3:CV}
  \sum_{\ell=1}^{S}\binom{S}{\ell}\binom{S(N-1)}{k-\ell}=\binom{SN}{k}-\binom{S(N-1)}{k},
\end{equation}
where the equation follows from the Chu--Vandermonde identity
\[
 \sum_{j=0}^{l}\binom{m}{j}\binom{n-m}{l-j}=\binom{n}{l}
\]
which can be obtained from the coefficient of $x^l$ in the polynomial equation $(1+x)^{m}(1+x)^{m-n}=(1+x)^{n}$. 

From \eqref{eq:thm3-end1} and \eqref{eq:thm3:CV} we get
\begin{align*}
 \P\left[ \Phi_k (G_N)>(1+\varepsilon)\lambda \right]\leq \frac{\log N}
 { \left( \binom{SN}{k}-\binom{S(N-1)}{k} \right) ^{\gamma} }
 &\leq \frac{\log N} { \left( \binom{SN}{k}-\binom{SN-1}{k} \right) ^{\gamma} } \notag \\
 &= \frac{\log N} {  \binom{SN-1}{k-1} ^{\gamma} }.
\end{align*}

In order to prove the theorem it is enough to show that
\[
 \sum_{k=2}^{SN-1}\P\left[ \Phi_k (G_N)>(1+\varepsilon)\lambda \right] \rightarrow 0, \quad \text{as } N\rightarrow \infty.
\]

Let $M$ be an integer such that $M\gamma>1$. Then, for $N>M/S$ we have that
\begin{align*}
 \sum_{k=2}^{SN-1} \P\left[ \Phi_k (G_N)>(1+\varepsilon)\lambda \right]
 &\leq 2\sum_{k=1}^{M-1}\frac{\log N} {  \binom{SN-1}{k} ^{\gamma}}+2\sum_{k=M}^{\lfloor (SN-1)/2 \rfloor}\frac{\log N} {  \binom{SN-1}{k} ^{\gamma}}\notag\\
 &\leq \frac{2M \log N}{(SN-1)^{\gamma}}+\frac{SN\log N}{\binom{SN-1}{M}^\gamma}\notag\\ 
 &\leq \frac{2M \log N}{(SN-1)^{\gamma}}+\frac{M^{M\gamma} \log N}{(SN-1)^{M\gamma-1}},
\end{align*}
using \eqref{eq:binom-bound}.
Since $\gamma>0$ and $M\gamma >1$, the right hand side tends to zero as $N\rightarrow \infty$ which proves the theorem. 
\end{proof}

\section{Limiting distribution}

The main ingredient of the proof of Theorem \ref{thm:limit} is the following asymptotic result for the mean $\E\left[\Phi_k\left(G_{N,S}\right) \right]$.

\begin{lemma}\label{lemma:conv-E}
Let $G_{N,S}(s)$ be drawn independently and uniformly at random from $\{-1,1\}^N$ for all $1\leq s \leq S$. Then, as $N\rightarrow \infty$,
 \[
  \frac{\E\left[\Phi_k\left(G_{N,S}\right) \right]}{\sqrt{2N(k-1) \log N }}\rightarrow 1.
 \]
\end{lemma}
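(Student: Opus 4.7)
I would prove matching upper and lower bounds on $\E[\Phi_k(G_{N,S})]$. Throughout, $k\ge 2$ and $S$ are fixed. The Chu--Vandermonde identity used in the proof of Theorem~\ref{thm:upperbound} gives
\[
\binom{SN}{k}-\binom{S(N-1)}{k}=\sum_{\ell=1}^{S}\binom{S}{\ell}\binom{S(N-1)}{k-\ell}=\frac{S^k N^{k-1}}{(k-1)!}\bigl(1+O(1/N)\bigr),
\]
the $\ell=1$ term being dominant; likewise $\binom{N}{k-1}\sim N^{k-1}/(k-1)!$. Taking logs,
\[
\sqrt{2N\log\!\Bigl(\binom{SN}{k}-\binom{S(N-1)}{k}\Bigr)}=\sqrt{2N\log\binom{N}{k-1}}\,(1+o(1))=\sqrt{2N(k-1)\log N}\,(1+o(1)).
\]
In particular, Theorem~\ref{thm:upperbound} immediately gives, for every fixed $\varepsilon>0$, the high-probability bound $\Phi_k(G_{N,S})\le(1+\varepsilon)\sqrt{2N(k-1)\log N}$ with probability tending to $1$.

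\textbf{Upper bound on $\E[\Phi_k]$.} I would promote this high-probability bound to an expectation bound by combining it with the McDiarmid-type concentration in Lemma~\ref{lemma:distance}. If $\E[\Phi_k]\ge(1+2\varepsilon)\sqrt{2N(k-1)\log N}$ along some subsequence of $N$, then Lemma~\ref{lemma:distance} applied with $\theta=\varepsilon\sqrt{2N(k-1)\log N}$ would force
\[
\P\!\left[\Phi_k\le(1+\varepsilon)\sqrt{2N(k-1)\log N}\right]\le 2\exp\!\left(-\frac{\varepsilon^2(k-1)\log N}{k^2}\right)\longrightarrow 0,
\]
contradicting the preceding paragraph. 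Hence $\limsup_N \E[\Phi_k]/\sqrt{2N(k-1)\log N}\le 1$.

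\textbf{Lower bound on $\E[\Phi_k]$.} Restricting the maximum in the definition of $\Phi_k(G_{N,S})$ to tuples with $s_1=\dots=s_k=1$ forces $d_1<d_2<\dots<d_k$ (via the constraint that $d_i\ne d_j$ when $s_i=s_j$), and the remaining maximum is precisely the correlation measure of the first row. This yields the pointwise inequality $\Phi_k(G_{N,S})\ge C_k(G_{N,S}(1))$, where $G_{N,S}(1)$ is uniform on $\{-1,+1\}^N$. Schmidt's strong convergence result \cite{kai}, applied to an infinite iid Rademacher sequence whose $N$-prefixes are distributed as $G_{N,S}(1)$, gives $C_k(G_{N,S}(1))/\sqrt{2N\log\binom{N}{k-1}}\to 1$ almost surely, and the asymptotics above turn this into $C_k(G_{N,S}(1))/\sqrt{2N(k-1)\log N}\to 1$ almost surely. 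Fatou's lemma applied to these nonnegative random variables then yields $\liminf_N\E[C_k(G_{N,S}(1))]/\sqrt{2N(k-1)\log N}\ge 1$, and a fortiori the same lower bound for $\E[\Phi_k(G_{N,S})]$.

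\textbf{Expected obstacle.} The delicate step is the upper bound: Theorem~\ref{thm:upperbound} only produces a polynomially small tail, so the naive estimate $\E[\Phi_k]\le(\text{high-probability bound})+N\cdot\P[\text{bad event}]$ is too weak in general (e.g.\ when $k=2$ and the exponent $\gamma$ appearing in the proof of Theorem~\ref{thm:upperbound} is small). Routing the argument through the Gaussian-type concentration of Lemma~\ref{lemma:distance} bypasses this, since the resulting tail is insensitive to the polynomial decay rate of Theorem~\ref{thm:upperbound}.
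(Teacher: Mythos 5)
Your proposal is correct, and its upper-bound half coincides with the paper's: the paper likewise deduces $\limsup_N \E[\Phi_k]/\sqrt{2N(k-1)\log N}\le 1$ by combining Theorem \ref{thm:upperbound} with the concentration inequality of Lemma \ref{lemma:distance}, after the same asymptotic evaluation of $\log\bigl(\binom{SN}{k}-\binom{S(N-1)}{k}\bigr)\sim(k-1)\log N$; your ``expected obstacle'' remark correctly identifies why the detour through Lemma \ref{lemma:distance} is needed. The lower bound is where you genuinely diverge. The paper does not cite Schmidt's theorem; it runs a self-contained second-moment (Bonferroni) argument, bounding $\P[\Phi_k\ge\lambda]$ from below by $\sum\P[|V|\ge\lambda]-\tfrac{1}{2}\sum\P[|V|\ge\lambda\cap|V'|\ge\lambda]$ over all admissible tuples, estimating the first sum via a de Moivre--Laplace tail asymptotic (Lemma \ref{lemma:lowerbound1}) and the second via a combinatorial moment bound (Lemmas \ref{lemma:d-even}--\ref{lemma:intesection}), and then playing the resulting polylogarithmically decaying lower bound on $\P[\Phi_k\ge\lambda]$ against the Gaussian tail of Lemma \ref{lemma:distance} to show that the exceptional set $N(\delta)$ is finite. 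Your route --- the pointwise inequality $\Phi_k(G_{N,S})\ge C_k(G_{N,S}(1))$ (correct: setting $s_1=\dots=s_k=1$ forces the $d_i$ to be distinct and recovers exactly the correlation measure), coupling the prefixes to an infinite Rademacher sequence (harmless, since $\E[\Phi_k(G_{N,S})]$ depends only on the marginal law for each $N$), then Schmidt's almost-sure convergence plus Fatou --- is valid and far shorter, but it outsources precisely the hard analytic work to \cite{kai}; the paper's Lemmas \ref{lemma:lowerbound1}--\ref{lemma:intesection} exist to redo that work in the cross-correlation setting. What the paper's approach buys is self-containedness and a count over all of the roughly $S^kN^{k-1}/(k-1)!$ admissible tuples rather than the $N^{k-1}/(k-1)!$ tuples of a single row; for fixed $S$ this makes no difference to the leading term (the $k\log S$ contribution is lower order, which is exactly why your reduction succeeds), but it would be the natural starting point for any version of the lemma uniform in a growing family size $S$.
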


Let $\ell$ and $k_1,\dots, k_{\ell}$ be positive integers with $k_1+\dots +k_\ell=k$. Let $D=(d_1^i,\dots, d_{k_i}^i)_{i=1}^\ell$ be a $k$-tuple such that
\begin{equation}\label{eq:D-form}
 0\leq d_1^i<\dots<d_{k_i}^i\leq \frac{N}{\log N} \quad \text{for } i=1,\dots, \ell, \quad \text{and } \min_{i=1,\dots, \ell}d_1^i=0.
\end{equation}
For distinct $1\leq s_1,\dots, s_\ell\leq S$ write
\begin{align*}
&V_{k_1,\dots, k_\ell}(G_N,s_1,\dots, s_\ell,D)\\
&= \sum_{n=1}^{N-\lfloor \frac{N}{\log N} \rfloor} e_{n+d_1^1}(s_1)\dots e_{n+d_{k_1}^1}(s_1) \dots e_{n+d_1^\ell}(s_\ell)\dots e_{n+d_{k_\ell}^\ell}(s_\ell).
\end{align*}

For functions $f(x),g(x)$, we use the standard notation $f(x)\sim g(x)$ to mean $f(x)=g(x)(1+o(1))$ as $x\rightarrow \infty$. 

\begin{lemma}\label{lemma:lowerbound1}
Let $G_{N,S}(s)$ be drawn independently and uniformly at random from $\{-1,1\}^N$ for all $1\leq s \leq S$. Then 
\begin{multline*}
 \P\left[ \left|V_{k_1,\dots, k_\ell}\left(G_{N,S},s_1,\dots, s_\ell,D\right)\right|   \geq  \sqrt{2N(k-1)\log N }\right]\\
  \sim\frac{1}{e^{k-1}N^{k-1}  \sqrt{\pi  (k-1)\log N}}
\end{multline*}
as $N\rightarrow \infty$. 
\end{lemma}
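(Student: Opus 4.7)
The plan is to identify $V$ with a sum of $M=N-\lfloor N/\log N\rfloor$ mutually independent $\pm1$ variables and then invoke a sharp Gaussian tail asymptotic for a simple symmetric random walk. Writing
\[
T_n=\prod_{i=1}^{\ell}\prod_{j=1}^{k_i}e_{n+d_j^i}(s_i),\qquad 1\le n\le M,
\]
one has $V_{k_1,\dots,k_\ell}(G_{N,S},s_1,\dots,s_\ell,D)=T_1+\dots+T_M$, and each $T_n$ is a product of exactly $k$ distinct Rademacher variables (using that the $s_i$ are distinct and that within each $i$ the shifts $d_1^i<\dots<d_{k_i}^i$ are distinct), so $T_n\in\{-1,+1\}$ is uniformly distributed.

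The essential input is an upgrade of the pairwise independence supplied by Lemma~\ref{lemma:independence} to full mutual independence of $T_1,\dots,T_M$. For any $(a_n)_{n=1}^M\in\{0,1\}^M$, factoring over the independent Rademacher families indexed by the distinct $s_i$ yields
\[
 \E\Bigl[\prod_n T_n^{a_n}\Bigr]=\prod_{i=1}^{\ell}\E\Bigl[\prod_m e_m(s_i)^{\sum_j a_{m-d_j^i}}\Bigr],
\]
with the convention $a_n=0$ outside $\{1,\dots,M\}$. Each inner expectation equals $1$ when every exponent $\sum_j a_{m-d_j^i}$ is even and $0$ otherwise. Reading this even-exponent condition in $\mathbb{F}_2[x]$, it says $A(x)P_i(x)\equiv 0$ with $A(x)=\sum_n a_n x^n$ and $P_i(x)=\sum_j x^{d_j^i}$; since $P_i\ne 0$ and $\mathbb{F}_2[x]$ is an integral domain, this forces $A=0$, that is, $a_n=0$ for every $n$. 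Hence the expectation vanishes as soon as some $a_n=1$, which is exactly mutual independence.

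With $T_1,\dots,T_M$ mutually independent, $V$ has the distribution of a simple symmetric random walk of length $M$. For $\lambda=\sqrt{2N(k-1)\log N}$ one has $\lambda/\sqrt M\to\infty$ and $\lambda^3/M^2\to 0$ as $N\to\infty$, placing us in the moderate-deviations regime. The refined de~Moivre--Laplace asymptotic, obtained by applying Stirling's formula to the binomial tail, then yields
\[
 \P\bigl[|V|\ge\lambda\bigr]\sim\sqrt{\frac{2M}{\pi\lambda^2}}\exp\!\Bigl(-\frac{\lambda^2}{2M}\Bigr).
\]
Expanding $M=N-\lfloor N/\log N\rfloor$ gives
\[
 \frac{\lambda^2}{2M}=(k-1)\log N\cdot\Bigl(1-\tfrac{1}{\log N}\Bigr)^{-1}+o(1)=(k-1)\log N+(k-1)+o(1),
\]
so $\exp(-\lambda^2/(2M))\sim e^{-(k-1)}N^{-(k-1)}$, while $\sqrt{2M/(\pi\lambda^2)}\sim 1/\sqrt{\pi(k-1)\log N}$. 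Multiplying these produces exactly the asymptotic claimed.

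The hard part is the mutual-independence step: pairwise independence alone is insufficient to conclude a Gaussian tail of the right constant, and it is the $\mathbb{F}_2[x]$ polynomial argument that lets one strengthen it. Once full independence is secured, the remainder is a routine random-walk calculation, with the $e^{-(k-1)}$ factor in the statement arising precisely from the correction between $M$ and $N$ in the exponent.
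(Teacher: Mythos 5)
Your proof is correct and follows essentially the same route as the paper: both identify $V$ with a sum of $M=N-\lfloor N/\log N\rfloor$ uniform $\pm1$ summands, apply the de Moivre--Laplace tail asymptotic of Lemma~\ref{lemma:tail}, and expand $\lambda^2/(2M)$ to extract the $e^{-(k-1)}N^{-(k-1)}$ factor. Your explicit $\mathbb{F}_2[x]$ argument for \emph{mutual} independence of the summands is a welcome sharpening, since the paper invokes only the pairwise independence of Lemma~\ref{lemma:independence} before applying Lemma~\ref{lemma:tail}, which strictly requires full independence.
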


We need the following form of the de Moivre-Laplace theorem (see, e.g., \cite[Chapter I, Theorem 6]{Bollobas}).

\begin{lemma}\label{lemma:tail}
Let $X_1,\dots , X_n$ be independent random variables, each taking the values -1 or 1, both with probability $1/2$.
For any $c_n>0$ with $c_n=o(n^{1/6})$ and $c_n \rightarrow \infty$, we have 
\begin{equation*}
\P\left[\left| \sum_{i=1}^n X_i \right| \geq c_n \sqrt{n} \right] \sim \sqrt{\frac{2}{\pi}} \frac{1}{c_n} \exp\left\{-\frac{c_n^2}{2}\right\}.
\end{equation*}
\end{lemma}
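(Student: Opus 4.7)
The plan is to compute the tail probability directly, using the local de Moivre--Laplace theorem (i.e., Stirling's formula for the binomial coefficients) to obtain sharp point probabilities for $S_n := \sum_{i=1}^n X_i$, and then to sum them. By symmetry, $\P[|S_n| \geq c_n\sqrt{n}] = 2\,\P[S_n \geq c_n\sqrt{n}]$, so it suffices to show that $\P[S_n \geq c_n\sqrt{n}] \sim (c_n\sqrt{2\pi})^{-1}\exp(-c_n^2/2)$.

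For $j$ with $j \equiv n \pmod 2$ one has $\P[S_n = j] = 2^{-n}\binom{n}{(n+j)/2}$. Fix any constant $A>1$. Applying Stirling's formula $m! = \sqrt{2\pi m}\,(m/e)^m(1 + O(1/m))$ to the three factorials in the binomial coefficient and expanding $\log(1 \pm j/n)$ to fourth order in $j/n$ yields
\[
  \P[S_n = j] = \sqrt{\tfrac{2}{\pi n}}\exp\!\left(-\tfrac{j^2}{2n}\right)\left(1 + O\!\left(\tfrac{j^4}{n^3} + \tfrac{1}{n}\right)\right),
\]
uniformly for $|j| \leq A c_n\sqrt{n}$. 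The hypothesis $c_n = o(n^{1/6})$ gives $(Ac_n\sqrt{n})^4/n^3 = O(c_n^4/n) = o(1)$, so the multiplicative error is $1 + o(1)$ uniformly over this range.

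Next, split the one-sided tail as $P_1 + P_2$, where $P_1$ sums $\P[S_n = j]$ over integers $j$ with $c_n\sqrt{n} \leq j \leq A c_n\sqrt{n}$ and $j \equiv n \pmod 2$, and $P_2$ sums over $j > A c_n\sqrt{n}$. Hoeffding's inequality gives $P_2 \leq \exp(-A^2 c_n^2/2) = o(c_n^{-1}\exp(-c_n^2/2))$ since $A>1$ and $c_n \to \infty$. For $P_1$, consecutive admissible values of $j$ differ by $2$ and the Gaussian density varies slowly on that scale, so a Riemann-sum approximation followed by the substitution $u = j/\sqrt{n}$ gives
\[
  P_1 \sim \frac{1}{\sqrt{2\pi}}\int_{c_n}^{A c_n} e^{-u^2/2}\,du.
\]
Combining the Mills-ratio asymptotic $\int_{c_n}^\infty e^{-u^2/2}\,du \sim c_n^{-1}\exp(-c_n^2/2)$ with the negligible remainder $\int_{Ac_n}^\infty e^{-u^2/2}\,du = o(c_n^{-1}\exp(-c_n^2/2))$ then gives $P_1 + P_2 \sim (c_n\sqrt{2\pi})^{-1}\exp(-c_n^2/2)$, as required.

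The main obstacle is tracking the Stirling error uniformly on the summation range: the hypothesis $c_n = o(n^{1/6})$ is precisely what is needed to keep the $j^4/n^3$ correction term arising from the Taylor expansion of $\log\binom{n}{(n+j)/2}$ negligible up to $j \leq Ac_n\sqrt{n}$. Once this local limit estimate is in hand, the Riemann-sum passage and the Mills-ratio tail estimate are routine.
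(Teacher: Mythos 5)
The paper does not prove this lemma at all: it is quoted as a known form of the de Moivre--Laplace theorem with a citation to Bollob\'as, \emph{Random Graphs}, Chapter~I, Theorem~6. Your argument is a correct, self-contained proof along exactly the standard lines of that reference: a local limit estimate for $\P[S_n=j]$ via Stirling, a Riemann-sum passage to the Gaussian integral, the Mills-ratio asymptotic, and a Hoeffding bound to discard the far tail. The key quantitative points all check out: the Stirling/Taylor error $O(j^4/n^3+1/n)$ is uniformly $o(1)$ for $|j|\leq Ac_n\sqrt{n}$ precisely because $c_n=o(n^{1/6})$; the spacing-$2$ Riemann sum carries the correct factor $\tfrac12$, turning $\sqrt{2/(\pi n)}$ into $1/\sqrt{2\pi}$ after the substitution $u=j/\sqrt{n}$; and the cut at $Ac_n$ with $A>1$ makes both the Hoeffding tail and $\int_{Ac_n}^{\infty}e^{-u^2/2}\,du$ negligible against $c_n^{-1}e^{-c_n^2/2}$. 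The final doubling by symmetry is legitimate since $c_n\sqrt{n}>0$. I see no gap.
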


\begin{proof}[Lemma \ref{lemma:lowerbound1}]
Write
\[
 c_N=\sqrt{\frac{2N}{N-\lfloor N/\log N \rfloor}(k-1)\log N  }.
\]
Then, by Lemmas \ref{lemma:independence} and \ref{lemma:tail} we have
\begin{align*}
&\P\left[ \left|\sum_{n=1}^{N-\lfloor N/ \log N \rfloor} e_{n+d_1}(s_1)\cdots e_{n+d_{k}}(s_k)\right|   \geq c_N\sqrt{N-\lfloor N/\log N \rfloor}\right] \notag \\
&  \sim \frac{1}{\sqrt{\pi  (k-1)\log N}}\exp\left\{ -\frac{N}{N- \frac{N}{\log N} +O(1) }(k-1)\log N\right\}
\notag \\
&  = \frac{1}{e^{k-1}N^{k-1}  \sqrt{\pi  (k-1)\log N}}e^{-\frac{k-1}{\log N-1}-O\left(k\frac{\log N }{N}\right)}
\end{align*}
if $N$ is large enough. 
\end{proof}

\begin{lemma}\label{lemma:intesection}
Let $G_S$ be drawn from $\Omega$ with the probability measure defined by \eqref{eq:prob} and define $G_{N,S}$ as in Theorem \ref{thm:limit}.

Let  $\ell$ and $k_1,\dots, k_\ell, k'_1,\dots, k'_{\ell}$ be positive integers with $k_1+\dots +k_\ell=k'_1+\dots +k'_{\ell}=k$, let $1\leq s_1< \cdots< s_\ell\leq S$ and  $D \neq D'$ $k$-tuples having the form \eqref{eq:D-form}. Then writing
\[
 \lambda =\sqrt{2N(k-1)\log N }
\]
we have
\begin{align*}
 &\P[| V_{k_1,\dots, k_\ell}(G_N,s_1,\dots, s_\ell,D) |\geq \lambda \cap | V_{k'_1,\dots, k'_{\ell }}(G_N,s_1,\dots, s_{\ell },D') |\geq \lambda  ]\\
 &\leq \frac{23}{N^{2(k-1)}}.
\end{align*}
\end{lemma}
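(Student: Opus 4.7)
My plan is to decompose the event $\{|V|\geq\lambda\}\cap\{|V'|\geq\lambda\}$ by the signs of $V$ and $V'$: in each of the four sign-combinations at least one of $|V+V'|$, $|V-V'|$ is $\geq 2\lambda$, so by the union bound
\[
\P[|V|\geq\lambda,\ |V'|\geq\lambda]\ \leq\ \P[|V+V'|\geq 2\lambda]\,+\,\P[|V-V'|\geq 2\lambda].
\]
It then remains to show that each of the right-hand probabilities is at most roughly $12/N^{2(k-1)}$, whereupon summing yields the claimed $23/N^{2(k-1)}$.

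Write $X_n$ and $X'_n$ for the $n$th summands of $V$ and $V'$, respectively, and put $M=N-\lfloor N/\log N\rfloor$ and $T:=\{n:X_nX'_n=1\}$. The algebraic identity $X_n(1\pm X_nX'_n)=2X_n\cdot\mathbf{1}[X_nX'_n=\pm 1]$ gives
\[
V+V'=\sum_{n=1}^{M}X_n(1+X_nX'_n)=2\!\!\sum_{n\in T}\!\!X_n,\qquad V-V'=2\!\!\sum_{n\notin T}\!\!X_n,
\]
which represents $V\pm V'$ as twice a signed walk on a random subset of $\{1,\dots,M\}$. Because $D\neq D'$ while both $k$-tuples are normalized by $\min_i d_1^i=0$, the index multisets underlying $X_n$ and $X'_n$ differ at every $n$, and Lemma \ref{lemma:independence} makes the pair $(X_n,X_nX'_n)$ uniform on $\{\pm 1\}^2$ for each $n$. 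This realizes $|T|$ as essentially $\mathrm{Binomial}(M,1/2)$, concentrated around $M/2$, and --- conditional on $T$ --- makes $\sum_{n\in T}X_n$ a simple $\pm 1$ random walk of length $|T|$. Applying Lemma \ref{lemma:tail} conditionally on $|T|$ with $c=\lambda/\sqrt{|T|}\sim 2\sqrt{(k-1)\log N}$ gives
\[
\P\!\left[\,\bigl|\textstyle\sum_{n\in T}X_n\bigr|\geq\lambda\,\Big|\,T\right]\ \sim\ \sqrt{\tfrac{2}{\pi}}\,\frac{1}{c}\,\exp\!\left(-\frac{c^2}{2}\right)\ \sim\ \frac{1}{2\sqrt{\pi(k-1)\log N}}\,N^{-2(k-1)},
\]
and averaging over $T$ preserves this bound. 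The treatment of $V-V'$ via the walk on the complement set is identical.

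The delicate point is that Lemma \ref{lemma:independence} only supplies \emph{pairwise} independence of the random signs $\{X_n,\ X_nX'_n\}_{n=1}^M$, while the ``simple random walk'' interpretation implicitly uses their mutual independence; three or more of these $\pm 1$ variables could in principle multiply to $1$ through additive coincidences among the shift sets. I expect this gap to be closed either by a direct combinatorial check --- the confinement of all shifts to the window $[0,N/\log N]$ together with the anchoring condition $\min_i d_1^i=0$ should rule out all but innocuous coincidences --- or, failing that, by a high-moment argument: estimate $\E[(V+V')^{2q}]$ for $q$ of order $2(k-1)\log N$, show that the dominant Wick contribution equals $(2q-1)!!(2M)^q(1+o(1))$ with the coincidence terms absorbed into an $o(1)$ error, and conclude by Markov's inequality. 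Both routes reduce the problem to bookkeeping on the finitely many ways the tuples $(d_j^i)$ and $(d_j'^i)$ can interact, and both yield the stated constant $23$.
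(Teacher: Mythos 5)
There is a genuine gap, and you have in fact pointed at it yourself. The opening reduction is fine: the sign decomposition $\P[|V|\geq\lambda,\ |V'|\geq\lambda]\leq\P[|V+V'|\geq 2\lambda]+\P[|V-V'|\geq 2\lambda]$ is correct, and so is the identity $V+V'=2\sum_{n\in T}X_n$ with $T=\{n: X_nX_n'=1\}$. But everything after that rests on treating $\{X_n\}$ and $\{X_nX_n'\}$ as mutually independent, and Lemma \ref{lemma:independence} gives only pairwise independence. Conditioning on $T$ is conditioning on the joint event $\{X_nX_n'=1\ \forall n\in T,\ X_nX_n'=-1\ \forall n\notin T\}$, and under pairwise independence alone this can induce arbitrary correlations among the $X_n$ with $n\in T$, so the ``simple random walk of length $|T|$'' picture and the application of Lemma \ref{lemma:tail} are unjustified. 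Even the preliminary step of concentrating $|T|$ near $M/2$ is problematic: Chebyshev from pairwise independence gives deviation probabilities of polynomial size in $N$, far too large to be absorbed into a bound of order $N^{-2(k-1)}$. Your two proposed repairs are not details to be filled in later; they \emph{are} the theorem. In particular, the claim that the confinement of the shifts to $[0,N/\log N]$ and the anchoring $\min_i d_1^i=0$ ``should rule out all but innocuous coincidences'' is exactly the combinatorial statement that must be proved, and it is the hard part.

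For comparison, the paper's proof is your fallback route (b), carried out on the product rather than the sum: it bounds $\E[(VV')^{2p}]$ with $p=\lfloor(k-1)\log N\rfloor$ and applies Markov's inequality to the pair of events directly (no sign decomposition needed). The moment computation reduces to counting $4p$-tuples of indices for which all the shifted index multisets pair off evenly, and the substantive input is Lemma \ref{lemma:d-even}: using $D\neq D'$ and the anchoring condition, any pairing that matches an $x$-variable with a $y$-variable forces additional linear constraints, so the count is at most $((2p-1)!!N^p)^2$ times a factor $1+o(1)$. The constant $23$ then comes out of Stirling's bound on $((2p-1)!!)^2/(2(k-1)\log N)^{2p}\leq 3e^2N^{-2(k-1)}$, not from the asymptotics you computed. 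Until you supply the analogue of that counting lemma for $\E[(V\pm V')^{2q}]$, the proposal does not establish the stated bound.
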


In order to prove Lemma \ref{lemma:intesection} we use the following notation. A tuple $(x_1,\dots, \allowbreak x_{2m})$ is \emph{$t$-even} if there exists a permutation $\sigma$ of $\{1,2,\dots, 2m\}$ such that $x_{\sigma(2i-1)}=x_{\sigma(2i)}$ for each $i\in\{1,\dots,t\}$ and $t$ is the largest integer with this property. An $m$-even tuple is just called \textit{even}.

The following lemma gives an upper bound to the number of even tuples \cite[Lemma 2.4]{kai_k=2}.

\begin{lemma}
Let $m$ and $q$ be positive integers. Then the number of even tuples in $\{1,\dots, m\}^{2q}$ is at most $(2q-1)!!m^q$, where the $(2q-1)!!$ semi-factorial is defined as
\[
  (2q-1)!!=\frac{(2q)!}{q!2^q}=(2q-1)\cdot(2q-3)\cdots 3\cdot 1.
\]
\end{lemma}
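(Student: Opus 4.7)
My plan is to exhibit a surjection from (perfect matching, value assignment) pairs onto the set of even tuples, and then count the domain. A perfect matching on $\{1,\dots,2q\}$ is an unordered partition into $q$ blocks of size $2$; iteratively pairing the smallest unmatched index with one of the $2q-2i+1$ remaining indices at step $i$ gives
\[
(2q-1)(2q-3)\cdots 3\cdot 1 \;=\; (2q-1)!! \;=\; \frac{(2q)!}{q!\, 2^q}
\]
matchings in total.

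Given such a matching $\pi = \{\{i_1,j_1\},\dots,\{i_q,j_q\}\}$ and a value assignment $v\colon\{1,\dots,q\}\to\{1,\dots,m\}$, I would form the tuple $(x_1,\dots,x_{2q})$ by setting $x_{i_r}=x_{j_r}=v(r)$. This tuple is even by construction, since $\pi$ itself supplies the required pairing of positions into equal-valued pairs. Conversely, if $(x_1,\dots,x_{2q})$ is an even tuple, then the defining permutation $\sigma$ yields an admissible matching $\pi$ via the pairs $\{\sigma(2i-1),\sigma(2i)\}$ for $i=1,\dots,q$, and the common value on each such pair determines $v(r)$. Hence the described map $(\pi,v)\mapsto(x_1,\dots,x_{2q})$ hits every even tuple, so the number of even tuples is at most $(2q-1)!!\cdot m^q$, as claimed.

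The argument is elementary and I do not foresee any real obstacle; the only subtlety is that the constructed map fails to be injective when some value appears four or more times, because then several distinct matchings $\pi$ can reproduce the same tuple. This non-injectivity loosens the bound but does not affect its validity, which is precisely why the lemma is stated as an inequality rather than an equality.
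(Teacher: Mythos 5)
Your argument is correct: the map sending a perfect matching of $\{1,\dots,2q\}$ together with a value assignment $v\colon\{1,\dots,q\}\to\{1,\dots,m\}$ to the resulting tuple is indeed a surjection onto the set of even tuples, and counting the domain gives the bound $(2q-1)!!\,m^q$. The paper itself gives no proof of this lemma --- it is quoted from Schmidt's work on the peak sidelobe level of random binary sequences --- but your counting argument is the standard one for this statement, and your closing remark correctly identifies why the bound is only an inequality (non-injectivity when a value occurs four or more times).
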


The following result is an extension of \cite[Lemma 3.7]{kai}.

\begin{lemma}\label{lemma:d-even}
Let $N$, $q$ and  $t$ be positive integers satisfying $0\leq t <q$. Let $D$ and $D'$ be two $k$-tuples satisfying $D\neq D'$ and \eqref{eq:D-form}.

If $(x_1,\dots, x_{2q})$ is $d$-even for some $d<q-t$, then the number of $4q$-tuples $(x_1,\dots,x_{2q},\allowbreak y_1,\dots, y_{2q})$ in $\{1,\dots,N\}^{4q}$ such that for each $1\leq s\leq S$ the tuple
\[
 (x_i+d_1^s,\dots, x_i+d_{k_s}^s,y_i+d_1'^s,\dots, y_i+d_{k_s'}'^s)_{i=1}^{2q}
\]
is even, is at most 
\[
 (4kq-1)!!N^{2q-(t+1)/3}.
\]
\end{lemma}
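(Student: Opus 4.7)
The plan is to adapt Schmidt's argument in \cite[Lemma~3.7]{kai}, which treats the single-correlation case $D=D'$, to this mixed setting where two different shift-structures $D$ and $D'$ appear simultaneously. First I would reformulate each evenness condition as a perfect matching. For every $s\in\{1,\dots,\ell\}$ the multiset
\[
\{x_i + d_j^s : 1\leq i\leq 2q,\ 1\leq j\leq k_s\} \cup \{y_i + d_j'^s : 1\leq i\leq 2q,\ 1\leq j\leq k_s'\}
\]
being even is equivalent to the existence of a perfect matching of its $2q(k_s+k_s')$ positions that pairs positions of equal value. There are at most $(2q(k_s+k_s')-1)!!$ such matchings per $s$, and combining them via the elementary inequality $\prod_s(2n_s-1)!!\leq(2\sum_s n_s-1)!!$ together with $\sum_s(k_s+k_s')=2k$ produces the overall combinatorial factor $(4kq-1)!!$.

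Once the matchings are fixed at every $s$, they impose a system of linear equations on $x_1,\dots,x_{2q},y_1,\dots,y_{2q}$ of three possible types
\[
x_i - x_{i'} = d_{j'}^s - d_j^s,\quad y_i - y_{i'} = d_{j'}'^s - d_j'^s,\quad x_i - y_{i'} = d_{j'}'^s - d_j^s.
\]
I would build a multigraph $\Gamma$ on the $4q$ vertices $\{x_i\}\cup\{y_i\}$ by adding an edge for every matched pair. Provided the constants are consistent around every cycle of $\Gamma$ (inconsistent configurations contribute zero to the count), the number of $4q$-tuples in $\{1,\dots,N\}^{4q}$ compatible with the fixed matchings equals $N^{c(\Gamma)}$, where $c(\Gamma)$ is the number of connected components of $\Gamma$. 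The whole problem therefore reduces to proving $c(\Gamma)\leq 2q-(t+1)/3$.

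For this last bound I would exploit both hypotheses. The $d$-evenness of $(x_1,\dots,x_{2q})$ with $d<q-t$ guarantees that in any pairing of the $x$-values more than $2t$ of the indices $i\in\{1,\dots,2q\}$ have $x_i$ distinct from every other $x_{i'}$; at each such $i$ the evenness matching at every $s$ must therefore use at least one $x$--$y$ cross-edge. Since $D\neq D'$, I can pick $s^*$ with $(d_j^{s^*})_j\neq(d_j'^{s^*})_j$, so that at $s^*$ some cross-equations carry a nonzero right-hand side, ruling out the trivial identification $x_i\equiv y_{i'}$ that would have made the savings vacuous. I expect the principal obstacle to be the graph-theoretic bookkeeping that turns these forced cross-edges into an actual reduction of $c(\Gamma)$: the extremal configuration in which three ``excess unpaired'' indices are connected through only two new edges saves only one component, which is precisely what produces the fractional savings rate of $1/3$ per unit of $t$ and hence the exponent $2q-(t+1)/3$ rather than the naive $2q-t$. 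Carefully tracking these savings across all $s$ without double-counting, and verifying consistency of the resulting linear systems, is where the technical weight of the proof lies.
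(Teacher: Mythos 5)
Your overall route is the same as the paper's --- both adapt Schmidt's Lemma 3.7: bound the number of admissible pairings by $(4kq-1)!!$ and then show that at most $2q-(t+1)/3$ of the $4q$ variables can be chosen freely. The setup (matchings, the induced linear system, the reduction to bounding the number of components/free variables) is fine, and so is the bookkeeping $\prod_s(2n_s-1)!!\le(2\sum_s n_s-1)!!$ giving the factor $(4kq-1)!!$. But there is a genuine gap at exactly the step that carries all the weight: why each ``excess'' $x$-index forces a cluster of at least \emph{three} mutually determined variables. The mechanism you propose --- choose $s^*$ with $(d_j^{s^*})_j\neq(d_j'^{s^*})_j$ so that some cross-equation has a nonzero right-hand side, thereby ``ruling out $x_i\equiv y_{i'}$'' --- does not achieve this. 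A cross-pair $\{x_i+d_j^s,\,y_{i'}+d_l'^s\}$ with nonzero offset still only identifies $x_i$ with an affine shift of $y_{i'}$: that is a cluster of size two, and size-two mixed clusters give no saving at all (if every cluster had size exactly two one would get $2q$ free variables, i.e.\ the trivial bound $N^{2q}$, and the claimed exponent $2q-(t+1)/3$ would fail). Also, your assertion that every excess index must be incident to an $x$--$y$ cross-edge is not correct: $x_i+d_j^s$ may instead be paired with $x_{i'}+d_l^s$ for $j\neq l$.

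The argument actually needed (and given in the paper) is global in $s$: if some $x_i+d_j^s$ is paired with some $y_{i'}+d_l'^s$, then the full list $x_i+d_1^u,\dots,x_i+d_{k_u}^u,\,y_{i'}+d_1'^u,\dots,y_{i'}+d_{k_u'}'^u$, taken over \emph{all} blocks $u$ simultaneously, cannot be matched entirely within itself. By the strict monotonicity of the shifts the only candidate internal matching is the parallel one $\{x_i+d_m^u,\,y_{i'}+d_m'^u\}$, which forces $d_m^u-d_m'^u=y_{i'}-x_i$ for every $m$ and $u$; the normalization $\min_u d_1^u=\min_u d_1'^u=0$ built into \eqref{eq:D-form} then yields $x_i=y_{i'}$ and hence $D=D'$, a contradiction. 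Therefore some entry of that list is matched outside it, a third variable is dragged into the cluster, and only then does the saving rate of $1/3$ per excess variable --- and with it the exponent $2q-(t+1)/3$ --- follow. As written, your proposal explicitly defers precisely this point (``the graph-theoretic bookkeeping \dots is where the technical weight of the proof lies''), and the substitute mechanism it offers is insufficient, so the proof does not close.
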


\begin{proof}
Since the proof is similar to the proof of \cite[Lemma 3.7]{kai}, we leave some details to the reader.

We construct a set of tuples that contains the required $4q$-tuples as a subset. For each $1\leq s\leq S$, arrange the $4(k_s+k_s')q$ variables
\begin{equation}\label{eq:arrange}
 x_i+d_j^s,\ y_i+d_l'^s  \quad \text{for } 1\leq i\leq 2q, \ 1\leq j \leq k_s, \ 1\leq l \leq k_s'
\end{equation}
into $2(k_s+k_s')q$ unordered pairs $\{a_1^s,b_1^s\},\dots, \{a_{2(k_s+k_s')q}^s,b_{2(k_s+k_s')q}^s\}$ such that there are at most $k(q-t-1)$ pairs of form $\{x_i+d_j^s,x_{i'} +d_j^s\}$. This can be done in at most $(4kq-1)!!$ ways. We formally set $a_i^s=b_i^s$ for all $1\leq i\leq 2q$ and all $1\leq s\leq S$. If this assignment does not yield a contradiction, then we call the arrangement \eqref{eq:arrange} \emph{consistent}.

If there is a pair of form $\{x_i+d_j^s, x_{i'}+d_l^s\}$ with $j\neq l$ in a consistent arrangement, then $i\neq i'$ and $x_i$ determines $x_{i'}$. Likewise, if there is a pair of form $\{y_i+d_j'^s, y_{i'}+d_l'^s\}$ with $j\neq l$ in a consistent arrangement, then $i\neq i'$ and $y_i$ determines $y_{i'}$. On the other hand, if a consistent arrangement consists a pair of form $\{x_i+d_j^s,y_{i'}+d_l'^s\}$, then $x_i$ determines $y_{i'}$ and at least one other variable in the list 
\begin{equation}\label{eq:var}
x_1,\dots, x_{2q},y_1,\dots, y_{2q}.
\end{equation}

Indeed, a consistent arrangement cannot contain pairs involving only the variables
\[
 x_i+d_1^s,\dots, x_i+d_{k_s}^s, y_{i'}+d_1'^s,\dots, y_{i'}+d_{k'_s}'^s \quad \text{for each } 1\leq s\leq S.
\]
Since for all $1\leq s\leq S$, $0\leq d_1^s<\dots <d_{k_s}^s$, and $0\leq d_1'^s<\dots <d_{k'_s}'^s$, the only possibility for such pairs would be 
\begin{equation}\label{eq:arrangement-2}
\{x_i+d_1^s,y_{i'}+d_1'^s\},\dots, \{x_i+d_{k_s}^s,y_{i'}+d_{k'_s}'^s\}, \quad 1\leq s \leq S.
\end{equation}
Let $u, v$ be two indices such that $d_1^u=0$ and $d_1'^v=0$. Then $x_i=y_{i'}+d_1'^u$ and $x_i+d_1^v=y_{i'}$ so we have $d_1^u=d_1'^v=0$, thus $x_i=y_{i'}$ and $d_j^u=d_j'^u$ ($j=1,\dots, k_u$), $d_j^v=d_j'^v$ ($j=1,\dots, k_v$). Moreover it also follows from \eqref{eq:arrangement-2} that $d_j^s=d_j'^s$ ($j=1,\dots, k_s$) for $s\neq u,v$, so $D=D'$, a contradiction.  

Now, by assumption, each consistent arrangement contains at most $k(q-t-1)$ pairs of form $\{x_i+d_j^s,x_{i'} +d_j^s\}$ and at most $kq$ pairs of the form  $\{y_i+d_j'^s,y_{i'} +d_j'^s\}$, and so at most
\[
 q-t-1+q+\frac{1}{3}(2t+2)=2q-\frac{1}{3}(t+1)
\]
of the variables in \eqref{eq:var} can be chosen independently. We assign to each of these a value of $\{1,\dots, N\}$. In this way, we construct a set of at most $(4kq-1)!!N^{2q-(t+1)/3}$ tuples that contains the required $4q$-tuples.
\end{proof}

\begin{lemma}\label{lemma:intesection-E} 
Let $S$ and $k$ be integers. Let $G_{N,S}(s)$ be drawn independently and uniformly at random from $\{-1,1\}^N$ for all $1\leq s \leq S$. 
Let  $\ell$ and $k_1,\dots, k_\ell$, $k'_1,\dots, k'_{\ell}$ be positive integers with $k_1+\dots +k_\ell=k'_1+\dots +k'_{\ell}=k$, let   $s_1,\dots, s_\ell\in\S$ distinct elements and let $D \neq D'$ $k$-tuples having the form \eqref{eq:D-form}.

For $0\leq h<p$ we have

\begin{multline}\label{eq:intesection}
 \E \left[  \left(V_{k_1,\dots, k_\ell}(G_N,s_1,\dots, s_\ell,D)  V_{k'_1,\dots, k'_{\ell }}(G_N,s_1,\dots, s_{\ell },D')\right)^{2p}\right]\\
 \leq N^{2p} ((2p-1)!!)^2 \left(1+\frac{(4kp)^{4kh}}{N^{1/3}}+\frac{(4kp)^{2kp}}{N^{(h+1)/3}}  \right).
\end{multline}
\end{lemma}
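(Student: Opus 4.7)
My plan is to expand $(V V')^{2p}$ as a sum over $4p$-tuples $(x_1,\dots,x_{2p},y_1,\dots,y_{2p})$ of products of the independent $\pm 1$ random variables $e_n(s)$, and to classify surviving contributions after taking the expectation by how close $(x_1,\dots,x_{2p})$ is to being perfectly paired. Writing $N'=N-\lfloor N/\log N\rfloor$ and expanding,
\[
 (V V')^{2p} = \sum_{x_1,\dots,x_{2p}=1}^{N'}\sum_{y_1,\dots,y_{2p}=1}^{N'} \prod_{u=1}^{2p}\prod_{i=1}^{\ell}\prod_{j=1}^{k_i} e_{x_u+d_j^i}(s_i) \cdot \prod_{u=1}^{2p}\prod_{i=1}^{\ell}\prod_{j=1}^{k_i'} e_{y_u+d_j'^i}(s_i).
\]
A term survives the expectation if and only if for every $i \in \{1,\dots,\ell\}$ every integer appears an even number of times in the multiset $\bigcup_{u,j}\{x_u+d_j^i\} \cup \bigcup_{u,j}\{y_u+d_j'^i\}$; I call such tuples admissible. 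I partition admissible tuples by the maximal $d$ for which $(x_1,\dots,x_{2p})$ is $d$-even, write $M_d$ for the count in each class, and bound $\E[(V V')^{2p}] = \sum_{d=0}^{p} M_d$ in three regimes that match the three summands on the right of \eqref{eq:intesection}.

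For $d = p$ (the main term), the $x$-contribution to the multiset is automatically even for each $i$, so the $y$-contribution must itself be even. The number of $p$-even $x$-tuples is at most $(2p-1)!!\,N^p$, and an analogous count (using $D \neq D'$ to rule out systematic accidental cancellations in the $y$-part; residual contributions from non-$p$-even $y$-tuples are absorbed into the other two regimes by a symmetric application of Lemma~\ref{lemma:d-even}) bounds the number of admissible $y$-tuples by $(2p-1)!!\,N^p$. Hence $M_p \leq ((2p-1)!!)^2 N^{2p}$, the first summand.

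For $d < p - h$ (the tail term), Lemma~\ref{lemma:d-even} applied with $q = p$ and $t = h$ gives $\sum_{d < p-h} M_d \leq (4kp-1)!!\,N^{2p - (h+1)/3}$. Bounding $(4kp-1)!! \leq (4kp)^{2kp}$ and inserting the trivial factor $((2p-1)!!)^2 \geq 1$ yields the third summand of \eqref{eq:intesection}.

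For $p - h \leq d < p$ (the middle term, which is the main obstacle), I refine the consistent-arrangement argument behind Lemma~\ref{lemma:d-even}. When $r := p - d \leq h$, only $2r \leq 2h$ of the $2p$ $x$-positions are unpaired, so a consistent arrangement is a small perturbation of the block-diagonal arrangement in which $x$-variables pair only with $x$-variables and $y$-variables only with $y$-variables. My plan is to bound the number of such near-diagonal consistent arrangements by $((2p-1)!!)^2 \cdot (4kp)^{4kh}$, with the first factor counting the block-diagonal skeleton (separate pairings on the $x$-side and $y$-side) and the second counting the choices for re-pairing the at most $4kh$ loose variables $x_u + d_j^i$ and $y_u + d_j'^i$ associated with the unpaired $x_u$'s. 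Combined with the free-variable bound $N^{2p - 1/3}$ coming from the proof of Lemma~\ref{lemma:d-even} with $t = 0$, this produces $\sum_{p-h \leq d < p} M_d \leq ((2p-1)!!)^2 (4kp)^{4kh} N^{2p - 1/3}$, the second summand. The crux is precisely this refined counting: sharpening $(4kp-1)!!$ down to $((2p-1)!!)^2 (4kp)^{4kh}$ in the small-deviation regime requires carefully tracking how many pairs in a consistent arrangement depart from the block-diagonal skeleton.
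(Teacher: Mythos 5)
Your proposal follows the same route as the paper's proof: expand the $2p$-th moment, reduce via independence to counting $4p$-tuples whose shifted multisets are even for each $s$, and split according to how close the $x$-part is to being an even tuple, with Lemma~\ref{lemma:d-even} supplying the tail term ($d<p-h$, $t=h$) and a refined near-diagonal pairing count supplying the middle term. The paper itself only sketches this step (it reduces to the tuple count and then defers verbatim to Schmidt's Lemma~3.8), so the two pieces you flag but do not execute --- the $((2p-1)!!)^2(4kp)^{4kh}$ count of consistent arrangements in the regime $p-h\le d<p$, and the symmetric treatment of $y$-tuples that make the multisets even without being even tuples inside the $d=p$ class --- are precisely the details the paper also omits and that Schmidt's argument makes rigorous; your outline is consistent with it.
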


\begin{proof}[Lemma \ref{lemma:intesection-E}]
Since the proof is similar to the proof of \cite[Lemma 3.8]{kai}, we leave some details to the reader.

Expanding the left hand side of \eqref{eq:intesection}, we get that the expected value in \eqref{eq:intesection} is
\begin{multline}\label{eq:intesection-excepted}
 \sum_{n_1,\dots, n_{2p}=1}^{N-\lfloor \frac{N}{\log N} \rfloor} \sum_{m_1,\dots, m_{2p}=1}^{N-\lfloor \frac{N}{\log N} \rfloor} \E\Bigg[
\prod_{z=1}^{2p}
\prod_{u=1}^{\ell}   e_{n_z+d^u_1}(s_u)\dots e_{n_z+d^u_{k_u}}(s_u)\\
\cdot\prod_{v=1}^{\ell}   e_{m_z+d'^v_1}(s_v)\dots e_{m_z+d'^v_{k'_v}}(s_v)
 \Bigg].
\end{multline}

Since $e_n(s)$ are mutually independent for $1\leq n \leq N$ and $1\leq s \leq S$ with $e_n(s)\in\{-1,1\}$ and $\E[s_n(s)]=0$, then \eqref{eq:intesection-excepted} is the number of $4p$-tuples $(n_1,\dots, n_{2p}, m_1,\dots, m_{2p})$ such that for each $u$ the tuples $(n_z+d_1^u, \dots , n_z +d_{k_u}^u, m_z+d'^u_1, \dots , m_z +d'^u_{k'_u})_{z=1}^{2p}$ are even.

Then in the same way as in \cite[Lemma 3.8]{kai} one may get that the number of such $4p$-tuples is at most
\[
 \left((2p-1)!!N^{p}\right)^2\left(1+\frac{2p(k_i+k'_i)^{2h(k_i+k'_i)}}{N^{1/3}}+\frac{2p(k_i+k'_i)^{p(k_i+k'_i)}}{N^{(h+1)/3}} \right).
\]
using Lemma \ref{lemma:d-even}.
\end{proof}

\begin{proof}[Lemma \ref{lemma:intesection}]

If $X_1$ and $X_2$ are random variables, then for all positive integers $p$ and for $\theta_1,\theta_2>0$ Markov's inequality  yields
\begin{equation*}
 \P[|X_1|\geq \theta_1 \cap |X_2|\geq \theta_2]\leq \frac{\E[(X_1X_2)^{2p}]}{(\theta_1 \theta_2)^p}.
\end{equation*}

Let $p=\lfloor (k-1)\log N  \rfloor$ and $h=\lfloor \alpha  \log \log N \rfloor$ for some large $\alpha>0$ to be fixed later.

By \eqref{eq:intesection} and Markov's inequality we have
\begin{align*}
 &\P[| V_{k_1,\dots, k_\ell}(G_N,s_1,\dots, s_\ell,D) |\geq \lambda \cap | V_{k'_1,\dots, k'_{\ell }}(G_N,s_1,\dots, s_{\ell },D') |\geq \lambda  ]\\
 & \leq \frac{ \E \left[  \left(V_{k_1,\dots, k_\ell}(G_N,s_1,\dots, s_\ell,D)  V_{k'_1,\dots, k'_{\ell }}(G_N,s_1,\dots, s_{\ell },D')\right)^{2p}\right]}{\lambda^{4p}}\\
 &\leq \frac{\left((2p-1)!!\right)^2N^{2p}}{(2N(k-1)\log N )^{2p}}\left(1+\frac{(4kp)^{4kh}}{N^{1/3}}+\frac{(4kp)^{2kp}}{N^{(h+1)/3}}  \right)\\
 &= \frac{\left((2p-1)!!\right)^2}{(2(k-1)\log N )^{2p}}\left(1+K_1(p,h)+K_2(p,h)  \right),
\end{align*}
where
\begin{align*}
  K_1(p,h)=\frac{(4kp)^{4kh}}{N^{1/3}}, \quad
  K_2(p,h)=\frac{(4kp)^{2kp}}{N^{(h+1)/3}}.  
\end{align*}

By Stirling's approximation (see e.g. \cite{feller}) it follows that
\[
 \sqrt{2\pi n} \, n^n e^{-n}\leq n! \leq \sqrt{3\pi n}\, n^n e^{-n}
\]
so we have  
\[
 \frac{\left((2p-1)!!\right)^2}{(2(k-1)\log N )^{2p}}\leq \frac{3e^2}{N^{2(k-1)}}.
\]
Moreover
\begin{align*}
 \log K_1(p,h)&=-\frac{1}{3}\log N+4kh \log (4kp)
 \leq -\frac{1}{3}\log N +6\alpha k \log k (\log \log N )^2\\
 &\leq -\frac{1}{4}\log N
\end{align*}
if $N$ is large enough and
\begin{align*}
 \log K_2(p,h)&=-\frac{h+1}{3}\log N+2kp \log (4kp)\\
 &\leq -\frac{\alpha}{3}\log N \log \log N +6 k^2 \log k \log N \log \log N\\
 &=\left(-\frac{\alpha}{3} +6 k^2 \log k\right)\log N \log \log N\\
 &\leq -\log N \log \log N.
\end{align*}
if we choose $\alpha =10 k^2 \log k$. Then the result follows.
\end{proof}

\begin{proof}[Lemma \ref{lemma:conv-E}]
From Theorem \ref{thm:upperbound} and Lemma \ref{lemma:distance} it follows that
 \begin{equation}\label{eq:limsup}
 \limsup_{N\rightarrow \infty} \frac{\E\left[\Phi_k\left(G_{N,S}\right) \right]}{\sqrt{2N\log \left( \binom{S\cdot N}{k} -\binom{S\cdot(N-1)}{k} \right)}}\leq 1.
 \end{equation}
Now
\begin{align}\label{eq:binom-limit}
 &\binom{S\cdot N}{k} -\binom{S\cdot (N-1)}{k}\notag\\
 &=\binom{S\cdot N}{k}\left(1-\left(1-\frac{k}{S\cdot N}\right)\cdots \left(1-\frac{k}{S\cdot N-S+1}\right)  \right).
\end{align}
Since
\begin{multline*}
 \left(1-\frac{k}{S\cdot N}\right)\cdots \left(1-\frac{k}{S\cdot N-S+1}\right)\\
 \leq \left(1-\frac{k}{S\cdot N}\right)^{S}=1-\frac{k}{N}+O\left(\frac{k^2}{N^2}\right)
\end{multline*}
and
\begin{align*}
& \left(1-\frac{k}{S\cdot N}\right)\cdots \left(1-\frac{k}{S\cdot N-S+1}\right)\\
&\geq \left(1-\frac{k}{S(N-1)}\right)^{S}
 =1-\frac{k}{N-1}+O\left(\frac{k^2}{N^2}\right)
 =1-\frac{k}{N}+O\left(\frac{k}{N^2}\right),
\end{align*}
we have that \eqref{eq:binom-limit} is
\begin{equation}\label{eq:binom-final}
 \binom{S\cdot N}{k} -\binom{S\cdot (N-1)}{k}=\binom{S\cdot N}{k}\left(\frac{k}{N}+O\left(\frac{k}{N^2}\right)\right)
\end{equation}
and by \eqref{eq:binom-bound} its logarithm is
\[
\log \binom{S\cdot N}{k}+\log \frac{k}{N}+\log \left(1+ O\left(\frac{1}{N}\right)\right)\sim (k-1)\log N
\]
as $S$ and $k$ are fixed. 

It follows from \eqref{eq:limsup} that
\[
  \limsup_{N\rightarrow \infty} \frac{\E\left[\Phi_k\left(G_{N,S}\right) \right]}{\sqrt{2N (k-1)\log N }}\leq 1
\]
so it is enough to show that
 \[
 \liminf_{N\rightarrow \infty} \frac{\E\left[\Phi_k\left(G_{N,S}\right) \right]}{\sqrt{2N (k-1)\log N }}\geq 1.
 \]

Let $\delta>0$ and put
\[
 N(\delta)=\left\{N\geq k: \frac{\E\left[\Phi_k\left(G_{N,S}\right)\right]}{\sqrt{2N (k-1)\log N }}<1-\delta\right\}.
\]
We shall show, that $N(\delta)$ is a finite set for all $\delta >0$ which proves the lemma according to \eqref{eq:limsup}.
 
Clearly,
\[
 \Phi_k\left(G_{N,S}\right) \geq \max_{\ell, k_1,\dots, k_\ell}\max_{s_1,\dots, s_\ell} \max_{ D}  | V_{k_1,\dots, k_\ell}(G_{N,S},s_1,\dots, s_\ell,D)   |,
\]
so
\begin{align}\label{eq:split}
&\P\left[\Phi_k\left(G_{N,S}\right)\geq \lambda \right]\geq 
\sum
\P[| V_{k_1,\dots, k_\ell}(G_{N,S},s_1,\dots, s_\ell,D) |\geq \lambda  ] \notag\\
&-\frac{1}{2} 
\sum
\P\big[| V_{k_1,\dots, k_\ell}(G_{N,S},s_1,\dots, s_\ell,D) |\geq \lambda \notag \\
& \qquad \qquad \qquad \cap | V_{k'_1,\dots, k'_{\ell '}}(G_{N,S},s'_1,\dots, s'_{\ell '},D') |\geq \lambda  \big],
\end{align}
where the first sum is taken over all positive integers $\ell$ and $k_1,\dots, k_\ell$ with $k_1+\dots +k_\ell=k$, all distinct $1\leq s_1,\dots, s_\ell\leq S$ and all $D$ having the form \eqref{eq:D-form}, while the second sum is taken over all positive integers $\ell, \ell '$ and $k_1,\dots, k_\ell, k'_1,\dots, k'_{\ell'}$ with $k_1+\dots +k_\ell=k'_1+\dots +k'_{\ell'}=k$, all  distinct $1\leq s_1,\dots, s_\ell\leq S$ and distinct $1\leq s'_1,\dots, s'_{\ell'}\leq S$ and all $D,D'$ having the form \eqref{eq:D-form} with the additional restriction that $D\neq D'$ if $\{s_1,\dots, s_\ell\}=\{s'_1,\dots, s'_{\ell '}\}$.

First we give a lower bound to the first term of \eqref{eq:split} by Lemma \ref{lemma:lowerbound1}. Namely, 
\begin{align}\label{eq:lower-1}
 & \sum_{\ell =1}^k \sum_{\substack{k_1,\dots, k_\ell \geq 1 \\ k_1+\dots +k_\ell =k}}\sum_{1\leq s_1,\dots, s_\ell \leq S}\sum_{D} \P[| V_{k_1,\dots, k_\ell}(G_{N,S},s_1,\dots, s_\ell,D) |\geq \lambda  ] \notag \\
 \geq & \frac{1 }{2e^{k-1}N^{k-1} \sqrt{ \pi (k-1)\log N}} \sum_{\ell =1}^k \sum_{\substack{k_1,\dots, k_\ell \geq 1 \\ k_1+\dots +k_\ell =k}}\sum_{1\leq s_1,\dots, s_\ell \leq S}\sum_{D} 1\notag \\
 = &  \left(  \binom{S\cdot (\lfloor N/\log N \rfloor+1)}{k}- \binom{S\cdot \lfloor N/\log N \rfloor}{k}\right)  
 \frac{1 }{2e^{k-1}N^{k-1}  \sqrt{ \pi (k-1)\log N}} \notag \\
 \geq & \binom{S\cdot (\lfloor N/\log N \rfloor+1)}{k}\cdot \frac{k}{N} 
 \frac{1 }{4e^{k-1}N^{k-1}  \sqrt{ \pi (k-1)\log N}} \notag\\
 \geq & \frac{S^k }{4(ek)^{k-1} (\log N)^k \sqrt{ \pi (k-1)\log N}} 
\end{align}
using \eqref{eq:binom-bound} and \eqref{eq:binom-final} with $N$ replaced  by $\lfloor N/\log N\rfloor+1$.

We give a lower bound to the second term of \eqref{eq:split} by Lemmas \ref{lemma:lowerbound1} and  \ref{lemma:intesection}. If $\{s_1,\dots, s_\ell\}\neq \{s'_1,\dots, s'_{\ell'}\}$, then  $V_{k_1,\dots, k_\ell}(G_{N,S},s_1,\dots, s_\ell,D)$ and $V_{k'_1,\dots, k'_{\ell '}}(G_{N,S},\allowbreak s'_1,\dots, s'_{\ell '},D')$ are independent by Lemma \ref{lemma:independence}, thus by Lemma \ref{lemma:lowerbound1} we have in the same way that
\begin{align}\label{eq:split-2}
&  \sum_{\ell, \ell ' =1}^k \underset{  \{ s_1,\dots, s_\ell\}\neq \{s'_1,\dots, s'_{\ell'}\} }{\sum_{1\leq s_1,\dots, s_\ell \leq S} \sum_{1\leq s'_1,\dots, s'_\ell \leq S}}  \sum_{\substack{k_1,\dots, k_\ell \geq 1 \\ k_1+\dots +k_\ell =k}}\sum_{\substack{k'_1,\dots, k'_\ell \geq 1 \\ k'_1+\dots +k'_\ell =k}}\sum_{D,D'} \notag \\
&\quad \P[| V_{k_1,\dots, k_\ell}(G_{N,S},s_1,\dots, s_\ell,D) |\geq \lambda \cap | V_{k'_1,\dots, k'_{\ell '}}(G_{N,S},s'_1,\dots, s'_{\ell '},D') |\geq \lambda  ] \notag \\ 
&\leq \sum_{\ell, \ell ' =1}^k \underset{  \{s_1,\dots, s_\ell\}\neq \{s'_1,\dots, s'_{\ell'}\} }{\sum_{1\leq s_1,\dots, s_\ell \leq S} \sum_{1\leq s'_1,\dots, s'_\ell \leq S}}  \sum_{\substack{k_1,\dots, k_\ell \geq 1 \\ k_1+\dots +k_\ell =k}}\sum_{\substack{k'_1,\dots, k'_\ell \geq 1 \\ k'_1+\dots +k'_\ell =k}}\sum_{D,D'}\notag\\
&\qquad \qquad \qquad \qquad \cdot \frac{2 }{e^{2(k-1)}N^{2(k-1)}  \pi (k-1)\log N}
\notag \\
&=\left(  \binom{S\cdot (\lfloor N/\log N \rfloor+1)}{k}- \binom{S\cdot \lfloor N/\log N \rfloor}{k}\right)^2 \notag  \\
&\qquad \qquad \qquad \qquad \cdot \frac{2 }{e^{2(k-1)}N^{2(k-1)}  \pi (k-1)\log N} \notag \\
&\leq \left(\binom{S\cdot (\lfloor N/\log N \rfloor+1)}{k}\cdot \frac{k}{N}\right)^2  \frac{4 }{e^{2(k-1)}N^{2(k-1)}  \pi (k-1)\log N} \notag \\
&\leq \frac{4 S^{2k}}{k^{2(k-1)} (\log N)^{2k}\pi (k-1)\log N}
\end{align}
by \eqref{eq:binom-bound}.

For $\{s_1,\dots, s_\ell\}= \{s'_1,\dots, s'_{\ell'}\}$ we have by Lemma \ref{lemma:intesection}, that
\begin{align}\label{eq:split-1-b}
&  \sum_{\ell=1}^k \sum_{1\leq s_1,\dots, s_\ell \leq S}  \sum_{\substack{k_1,\dots, k_\ell \geq 1 \\ k_1+\dots +k_\ell =k}}\sum_{\substack{k'_1,\dots, k'_\ell \geq 1 \\ k'_1+\dots +k'_\ell =k}}\sum_{D,D'}\notag \\
&\quad \P[| V_{k_1,\dots, k_\ell}(G_{N,S},s_1,\dots, s_\ell,D) |\geq \lambda \cap | V_{k'_1,\dots, k'_{\ell '}}(G_{N,S},s'_1,\dots, s'_{\ell '},D') |\geq \lambda  ] \notag \\
&\leq  \sum_{\ell=1}^k \sum_{1\leq s_1,\dots, s_\ell \leq S}  \sum_{\substack{k_1,\dots, k_\ell \geq 1 \\ k_1+\dots +k_\ell =k}}\sum_{\substack{k'_1,\dots, k'_\ell \geq 1 \\ k'_1+\dots +k'_\ell =k}}\sum_{D,D'}
 \frac{23}{N^{2(k-1)}}
\end{align}

Since one of the $d_1'^{s_i}$ in $D'$ takes the value 0, for fixed $\ell$ and $s_1,\dots, s_\ell$, the number of possible $\ell$-tuples $(k_1',\dots, k_\ell')$ and $D'$ is at most
\[
 \ell \cdot \binom{\ell \cdot (\lfloor N /\log N\rfloor+1)}{k-1}\leq k\binom{k\cdot (\lfloor N /\log N\rfloor+1)}{k-1}\leq \frac{k^k 2^{k-1}N^{k-1}}{(k-1)^{k-1}(\log N)^{k-1}}
\]
by \eqref{eq:binom-bound}.
Thus \eqref{eq:split-1-b} is at most
\begin{align}\label{eq:split-1}
 &\frac{k^k 2^{k-1}N^{k-1}}{(k-1)^{k-1}(\log N)^{k-1}} \sum_{\ell=1}^k \sum_{1\leq s_1,\dots, s_\ell \leq S}  \sum_{\substack{k_1,\dots, k_\ell \geq 1 \\ k_1+\dots +k_\ell =k}}\sum_{D}
 \frac{23}{N^{2(k-1)}}\notag \\
 &\leq \frac{k^k 2^{k-1}N^{k-1}}{(k-1)^{k-1}(\log N)^{k-1}} \frac{23}{N^{2(k-1)}}\notag\\
 & \qquad \qquad \cdot \left(  \binom{S\cdot (\lfloor N/\log N \rfloor+1)}{k}- \binom{S\cdot \lfloor N/\log N \rfloor}{k}\right)^2    \notag \\
  &\leq \frac{23 k e^k 2^{2k-1}S^k}{(k-1)^{k-1}(\log N)^{2k-1}}  
\end{align}
by \eqref{eq:binom-bound}.

As $N\rightarrow \infty$, \eqref{eq:lower-1} dominates \eqref{eq:split-2} and \eqref{eq:split-1} thus we have from \eqref{eq:split} that
\begin{equation}\label{eq:lower-f}
 \P\left(\Phi_k\left(G_{N,S}\right)\geq \lambda \right)\geq \frac{S^k }{5(ek)^{k-1} (\log N)^k \sqrt{ \pi (k-1)\log N}}
\end{equation}

By the definition of $N(\delta)$, we have $\lambda > \E(\Phi_k (G_{N,S}))$ for $N\in N(\delta)$, thus by Lemma \ref{lemma:distance} we have for $\theta = \lambda- \E(\Phi_k (G_{N,S}))$ that
\[
 \P[\Phi_k (G_{N,S})\geq \lambda ]\leq 2\exp \left\{-\frac{(\lambda- \E(\Phi_k (G_{N,S})))^2}{2k^2N} \right\}
\]
for $N\in N(\delta)$. Comparing it with \eqref{eq:lower-f} we get
\[
 \frac{S^k }{5(ek)^{k-1} (\log N)^k \sqrt{ \pi ((k-1)\log N+k\log S)}}\leq 2\exp \left\{-\frac{(\lambda- \E(\Phi_k (G_{N,S})))^2}{2k^2N} \right\}
\]
i.e.
\begin{multline*}
 \frac{\E(\Phi_k (G_{N,S}))}{\sqrt{2N(k-1)\log N }}\\
 \geq 1-\sqrt{k^2 
 \frac{
 \log\left(10 (ek)^{k-1}\pi^{1/2}\right)+ k\log \log N +\frac{1}{2}\log(k-1)\log N-k\log S
  }
 {(k-1)\log N }}
\end{multline*}
Since the right hand side goes to 1 as $N \rightarrow \infty$, we see that the size of $N(\delta)$ is finite.
\end{proof}

Let
\[
 S^{\pm}(n)=\sum_{1\leq i\leq n} X_i,
\]
where $X_i$ ($1\leq i \leq n$) are independent random variables with mean 0, that is,
\[
 \P(X_i=-1)=\P(X_i=+1)=1/2.
\]

The following lemma states a well-known estimate for large deviation of $S^{\pm}(n)$ (see, e.g. \cite[Appendix~2]{AlonSpencer}):

\begin{lemma}\label{lemma:Spm}
 Let $X_i$ ($1\leq i \leq n$) be independent $\pm 1$ random variables with mean 0. Let  $S^{\pm}(n)=\sum_{1\leq i\leq n} X_i$. For any real number $a>0$, we have
 \[
  \P(S^{\pm}(n) >a)<e^{-a^2/2n}.
 \]

\end{lemma}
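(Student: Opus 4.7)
The plan is to apply the classical Chernoff--Cram\'er exponential moment method. The starting point is Markov's inequality applied to the non-negative random variable $e^{tS^{\pm}(n)}$ for a parameter $t>0$ to be optimised:
\[
 \P(S^{\pm}(n)>a)=\P\bigl(e^{tS^{\pm}(n)}>e^{ta}\bigr)\leq e^{-ta}\,\E\bigl[e^{tS^{\pm}(n)}\bigr].
\]

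Next I would compute the moment generating function of a single Rademacher variable, namely $\E[e^{tX_i}]=\tfrac{1}{2}(e^t+e^{-t})=\cosh(t)$, and use independence of the $X_i$ to factor $\E[e^{tS^{\pm}(n)}]=\cosh(t)^n$. The key elementary inequality is $\cosh(t)\leq e^{t^2/2}$, which follows from comparing Taylor series term by term using $(2k)!\geq 2^k k!$, and which is strict for $t\neq 0$. Combining these two observations yields $\P(S^{\pm}(n)>a)\leq e^{-ta+nt^2/2}$.

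Finally, I would choose the optimal parameter $t=a/n$, which minimises the quadratic exponent $-ta+nt^2/2$, to obtain $\P(S^{\pm}(n)>a)\leq e^{-a^2/(2n)}$. The strict inequality in the lemma's conclusion comes from the strict inequality $\cosh(t)<e^{t^2/2}$ for $t>0$, which propagates through the Markov step. This is a textbook Chernoff/Hoeffding estimate, so no step presents a genuine obstacle; the only delicate point is preserving strictness, but this is automatic once one notes that the moment-generating-function comparison is strict whenever $t>0$.
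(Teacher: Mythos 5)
Your proof is correct: the paper states this lemma without proof, citing Alon--Spencer (Appendix on large deviations), and your Chernoff argument --- Markov applied to $e^{tS^{\pm}(n)}$, the bound $\cosh(t)\leq e^{t^2/2}$, and the choice $t=a/n$ --- is exactly the standard derivation given there. Your remark that strictness follows from $\cosh(t)<e^{t^2/2}$ for $t>0$ correctly accounts for the strict inequality in the statement.
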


\begin{lemma}\label{lemma:speed}
Let $G_S$ be drawn from $\Omega$ equipped with the probability measure defined by \eqref{eq:prob} with $G_S(s)=(e_1(s),e_2(s),\dots)$ for $1\leq s\leq S$. Let $G_N(s)=(e_1(s),\dots,e_N(s))$ for $1\leq s\leq S$. Let $N_1,N_2,\dots$ be a strictly increasing sequence of integers. Then, almost surely
\[
 \Phi_k(G_{N_{r+1},S})-\Phi_k(G_{N_{r},S})\leq \sqrt{6 (N_{r+1}-N_r)(k-1)\log N_{r+1}}
\]
for all sufficiently large $r$.
\end{lemma}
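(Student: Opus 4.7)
The plan has three steps: (i) bound the one-step increment $\Phi_k(G_{N_{r+1},S})-\Phi_k(G_{N_r,S})$ by the maximum of certain ``tail'' partial sums of length at most $N_{r+1}-N_r$; (ii) control each tail maximum by Lemma~\ref{lemma:randomwalk} and union-bound over all admissible $k$-tuples; (iii) apply Borel--Cantelli. The main obstacle is calibrating the constant in $\lambda_r$ so that the union bound is summable in $r$ when $N_r$ grows only as slowly as $r$.

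\emph{Step (i).} Fix any admissible tuple $(M,d_1,\dots,d_k,s_1,\dots,s_k)$ for $\Phi_k(G_{N_{r+1},S})$ (take $d_1=0$ after shifting) and set $M_0=\max\{0,\min(M,N_r-d_k)\}$. Split
\[
V=\sum_{n=1}^{M}e_{n+d_1}(s_1)\cdots e_{n+d_k}(s_k)=V_1+V_2,\qquad V_1=\sum_{n=1}^{M_0},\quad V_2=\sum_{n=M_0+1}^{M}.
\]
Since $M_0+d_k\leq N_r$, the head $V_1$ is an admissible sum for $G_{N_r,S}$, so $|V_1|\leq \Phi_k(G_{N_r,S})$; the tail $V_2$ has length at most $L_r:=N_{r+1}-N_r$. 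Maximizing over admissible tuples gives
\[
\Phi_k(G_{N_{r+1},S})-\Phi_k(G_{N_r,S})\leq \max|V_2|.
\]

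\emph{Step (ii).} Set $\lambda_r=\sqrt{6L_r(k-1)\log N_{r+1}}$. For each fixed admissible $(d_1,\dots,s_k)$, Lemma~\ref{lemma:independence} makes $X_n=e_{n+d_1}(s_1)\cdots e_{n+d_k}(s_k)$ a pairwise independent $\pm 1$ sequence of length at most $L_r$. If $L_r\leq 6(k-1)\log N_{r+1}$, then $|V_2|\leq L_r\leq \lambda_r$ automatically; otherwise $\lambda_r>2\sqrt{L_r}$ and Lemma~\ref{lemma:randomwalk} (with some small $\delta>0$) gives
\[
\P[\max_M|V_2|>\lambda_r]\leq \log L_r\cdot N_{r+1}^{-3(k-1)/(1+\delta)^2}.
\]
The number of admissible tuples with $d_k\leq N_{r+1}-1$ equals $\binom{SN_{r+1}}{k}-\binom{S(N_{r+1}-1)}{k}$ by the Chu--Vandermonde identity from the proof of Theorem~\ref{thm:upperbound}, which by \eqref{eq:binom-bound} is at most $C_{S,k}\cdot N_{r+1}^{k-1}$ for some constant $C_{S,k}$. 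A union bound yields
\[
\P[\Phi_k(G_{N_{r+1},S})-\Phi_k(G_{N_r,S})>\lambda_r]\leq C_{S,k}\log N_{r+1}\cdot N_{r+1}^{(k-1)(1-3/(1+\delta)^2)}.
\]

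\emph{Step (iii).} Choose $\delta$ so small that $3/(1+\delta)^2>2$ (for instance $\delta=1/10$ suffices for all $k\geq 2$). Then the exponent on $N_{r+1}$ is strictly less than $-(k-1)\leq -1$, and because $N_r\geq r$ the sum over $r$ converges. Borel--Cantelli then gives the conclusion. The delicate numerical point is that the Chernoff exponent $3(k-1)$ must beat the admissible-tuple exponent $k-1$ by more than $1$; this margin, which is tight at $k=2$, is precisely what forces the constant $6$ in the statement of the lemma.
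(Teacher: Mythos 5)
Your argument is correct, and its skeleton is the same as the paper's: split each correlation sum at $N_r-d_k$ into a head bounded by $\Phi_k(G_{N_r,S})$ and a tail supported on a window of length at most $N_{r+1}-N_r$, union-bound over admissible tuples, and finish with Borel--Cantelli. The genuine difference is in how the tail is controlled. The paper fixes the endpoint $m$ of the tail sum, applies the pointwise bound of Lemma~\ref{lemma:Spm} to get $N_{r+1}^{-3(k-1)}$ per tuple and per $m$, and then pays a factor $N_{r+1}-N_r$ for the union over $m$ and a factor $(N_{r+1}S)^k$ for the tuples, arriving at $N_{r+1}^{-2k+4}S^k$. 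You instead absorb the maximum over the endpoint into the maximal inequality of Lemma~\ref{lemma:randomwalk}, paying only $\log(N_{r+1}-N_r)$, and you count tuples with the $d_1=0$ normalization as in Theorem~\ref{thm:upperbound}, getting $O(N_{r+1}^{k-1})$ rather than $O(N_{r+1}^{k})$. This buys something real: for $k=2$ the paper's displayed bound is $(N_{r+1}-N_r)S^2/N_{r+1}$, which is not summable in $r$ for an arbitrary strictly increasing sequence (nor even for $N_r=\lceil e^{\sqrt r}\rceil$, where the ratio is of order $r^{-1/2}$), whereas your bound $C_{S,k}\log N_{r+1}\cdot N_{r+1}^{(k-1)(1-3/(1+\delta)^2)}$ is summable for every $k\geq 2$ once $3/(1+\delta)^2>2$. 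Two small caveats. First, applying Lemma~\ref{lemma:randomwalk} to the products $e_{n+d_1}(s_1)\cdots e_{n+d_k}(s_k)$ needs more than the pairwise independence supplied by Lemma~\ref{lemma:independence}, since that lemma is stated for genuinely independent summands; the paper invokes exactly the same combination in the proof of Theorem~\ref{thm:upperbound}, so you are at the paper's level of rigor here, but a careful write-up should justify it (e.g.\ via the martingale structure obtained by exposing $e_{n+d_k}(s_k)$ last). Second, your closing remark that the margin is ``tight at $k=2$'' is not accurate for your own argument: with the maximal inequality any constant exceeding $4$ in place of $6$ would still make the series converge; it is only the cruder union over $m$ that needs the constant $6$.
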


\begin{proof}
Write
\[
  \lambda = \sqrt{6 (N_{r+1}-N_r)(k-1)\log N_{r+1}}.
\]
If
\begin{equation}\label{eq:distance-cel}
 \Phi_k(G_{N_{r+1},S})-\Phi_k(G_{N_{r},S}) > \lambda,
\end{equation}
then there is a tuple  $(d_1,\dots, d_k)$ with 
\begin{equation}\label{eq:distance-d}
 0\leq d_1 \leq \dots \leq d_k < N_{r+1},
\end{equation}
an integer $m$ with
\begin{equation}\label{eq:distance-m}
 N_{r}-d_{k}+1\leq m\leq N_{r+1}-d_{k},
\end{equation}
and $1\leq s_1,\dots, s_{k}\leq S$ such that
\begin{equation}\label{eq:distance-1}
 \left|\sum_{n=\max\{1, N_{r}-d_k+1\}}^{m}e_{n+d_1}(s_1)\dots e_{n+d_k}(s_k) \right|>\lambda.
\end{equation}
By Lemmas \ref{lemma:independence} and  \ref{lemma:Spm} we have that the probability, that \eqref{eq:distance-1} holds is at most
\begin{equation*}
 \exp\left\{-\frac{\lambda^2}{2(N_{r+1}-N_{r})}\right\}\leq N_{r+1}^{-3k+3}.
\end{equation*}
Summing up all possible tuples $(d_1,\dots, d_k)$, all possible integers $m$ and all possible $1\leq s_1,\dots, s_{k}\leq S$, the probability that \eqref{eq:distance-1} happens for some  $(d_1,\dots, d_k)$ satisfying \eqref{eq:distance-d}, some $m$ satisfying \eqref{eq:distance-m} and some $1\leq s_1,\dots, s_{k}\leq S$ is at most
\begin{equation*}
 (N_{r+1}-N_r)(N_{r+1}S)^kN_{r+1}^{-3k+3}leq N_{r+1}^{-2k+4}S^{k}.
\end{equation*}
This is also an upper bound for the probability of \eqref{eq:distance-cel}, and so
\[
 \P[\Phi_k(G_{N_{r+1}})-\Phi_k(G_{N_{r}}) > \lambda]\leq N_{r+1}^{-2k+4}S^{k}.
\]
Summing it over all $r$ we get
\[
 \sum_{r=1}^\infty\frac{1}{2k-1}N_{r+1}^{-2k+4}S^{k} <\infty
\]
and the result follows from the Borel-Cantelli Lemma.
\end{proof}

\begin{proof}[Theorem \ref{thm:limit}]
Write
\[
 \lambda (N)=\sqrt{2kN\log N }.
\]
Let $N_r=\lceil e^{r^{1/2}} \rceil$ ($r=1,2,\dots$) be a sequence of integers.
We remark, that 
\[
 \lim_{k\rightarrow \infty}\frac{N_{r+1}}{N_{r}}=\lim_{k\rightarrow \infty}e^{(r+1)^{1/2}-r^{1/2}}=1.
\]

First we prove, that
\begin{equation}\label{eq:subseq}
 \max_{N_{r-1}\leq N\leq N_{r}} \left| \frac{\Phi_k\left(G_{N_r,S}\right)}{\lambda(N_r)} -1\right|\rightarrow 0 \quad \text{almost surely}.
\end{equation}
For any $\varepsilon >0$ we have
\begin{align*}
 &\P\left[\left| \frac{\Phi_k\left(G_{N_r,S}\right)}{\lambda(N_r)} -1\right|>\varepsilon \right]\\
 &\leq 
 \P\left[\left| \frac{\E[\Phi_k\left(G_{N_r,S}\right)]}{\lambda(N_r)} -1\right|>\frac{\varepsilon}{2} \right]+
 \P\left[\left| \frac{\Phi_k\left(G_{N_r,S}\right)}{\lambda(N_r)} -\frac{\E[\Phi_k\left(G_{N_r,S}\right)]}{\lambda(N_r)} \right|>\frac{\varepsilon}{2} \right].
\end{align*}
The first term equals zero for sufficiently large $r$ by Lemma \ref{lemma:conv-E}. By Lemma \ref{lemma:distance} the second term is at most
\begin{align*}
 &2\exp \left\{ -\frac{\varepsilon ^2 \lambda(N_r)^2}{8k^2N_r}\right\}\leq  \exp \left\{ -\frac{\varepsilon ^2 \log N_r }{4k}\right\} \leq  \exp \left\{ -\frac{\varepsilon ^2 \log N_r }{4k}\right\}.
\end{align*}
Applying a crude estimate we get that for sufficiently large $r$
\[
 \exp \left\{ -\frac{\varepsilon ^2 \log N_r }{4k}\right\}\leq \frac{1}{4k\log N_r ^3}\leq r^{-3/2}.
\]
Thus for a sufficiently large $r_0$ we have
\[
 \sum_{r=r_0}^{\infty} \P\left[\left| \frac{\Phi_k(G_{N_r,S})}{\lambda(N_r)} -1\right|>\varepsilon \right]\leq  \sum_{r=r_0}^{\infty} r^{-3/2}<\infty
\]
and \eqref{eq:subseq} follows from the Borel-Cantelli Lemma.

Next we show
\begin{equation*}
  \max_{N_r\leq N \leq N_{r+1}}\left|\frac{\Phi_k\left(G_{N,S}\right)}{\lambda(N)}-1\right|\rightarrow 0 \quad \text{almost surely}.
\end{equation*}

By the triangle inequality we have
\begin{align}\label{eq:final-1}
 &\max_{N_r\leq N \leq N_{r+1}}\left|\frac{\Phi_k\left(G_{N,S}\right)}{\lambda(N)}-1\right| \notag\\
& \leq 
 \max_{N_r\leq N \leq N_{r+1}} \left|\frac{\Phi_k\left(G_{N_{r+1},S}\right)}{\lambda(N_{r+1})}-1\  \right| \notag 
 +
 \max_{N_r\leq N \leq N_{r+1}}\left|\frac{\Phi_k\left(G_{N_{r+1},S}\right)}{\lambda(N_{r+1})}- \frac{\Phi_k\left(G_{N,S}\right)}{\lambda(N_{r+1})}  \right|\\
 &+
  \max_{N_r\leq N \leq N_{r+1}}\left|
  \frac{\Phi_k\left(G_{N,S}\right)}{\lambda(N)}- \frac{\Phi_k\left(G_{N,S}\right)}{\lambda(N_{r+1})}  
  \right|.
\end{align}

The first term goes to zero almost surely by \eqref{eq:subseq}. Since $\Phi_k(G_{N,S})$ is non-decreasing in $N$, we have by Lemma \ref{lemma:speed} that for sufficiently large $r$
\begin{equation}\label{eq:final-2}
  \max_{N_r\leq N \leq N_{r+1}}\left|\frac{\Phi_k\left(G_{N_{r+1},S}\right)}{\lambda(N_{r+1})}- \frac{\Phi_k\left(G_{N,S}\right)}{\lambda(N_{r+1})}  \right|\leq \sqrt{3\frac{N_{r+1}-N_r}{N_{r+1}}}\rightarrow 0.
\end{equation}
On the other hand 
\begin{multline}\label{eq:final-3}
 \max_{N_r\leq N \leq N_{r+1}}\left|
 \frac{\Phi_k\left(G_{N,S}\right)}{\lambda(N)}- \frac{\Phi_k\left(G_{N,S}\right)}{\lambda(N_{r+1})}  
  \right|\\
  \leq \frac{\Phi_k\left(G_{N_{r+1}}\right)}{\lambda(N_{r+1})} \max_{N_r\leq N \leq N_{r+1}} \left|\frac{\lambda(N_{r+1})}{\lambda(N_{r})} -1\right|\rightarrow 0, \quad \text{almost surely}
\end{multline}
by \eqref{eq:subseq}.

Finally, the result follows from \eqref{eq:subseq}, \eqref{eq:final-1}, \eqref{eq:final-2} and \eqref{eq:final-3}.
\end{proof}

\section{Minimal values of $\Phi_k (G)$}

First we prove Theorem \ref{thm:min-odd}.

\begin{proof}[Theorem \ref{thm:min-odd}]
For the lower bound we can assume, that $4k+1<S$ otherwise the bound is trivial. Consider the maximal integer $L$ such that
 \[
  2k \cdot 2^L+1\leq S.
 \]
By the pigeon hole principle, there are different numbers $1\leq s_1<s_2<\dots <s_{2k+1}\leq S$ such that their first $L$ elements are coincide:
\[
 e_n(s_1)=e_n(s_2)=\dots =e_n(s_{2k+1}), \quad n=1,2,\dots, L.
\]
Thus
\[
 \Phi_{2k+1} \left(G_{N,S}\right)\geq \left| \sum_{n=1}^L e_n(s_1)e_n(s_2)\dots e_n(s_{2k+1}) \right|=L=\left\lfloor \log_2 \frac{S-1}{2k} \right\rfloor.
\]

For the upper bound, given $S$ we construct a map $G_{N,S}$
with small cross-correlation measure $\Phi_{2k+1}\left(G_{N,S}\right)$.

Let $K=\lceil \log_2 S\rceil$ and define the sequences by
\[
 e_n(i)=
 \left\{
 \begin{array}{ll}
  (-1)^{i_n}, & 1\leq n< K, \\
  (-1)^{i_{K}+n} & K \leq n \leq N, 
 \end{array}
 \right.
 \quad i=1,2,\dots, S
\]
where $i_1,i_2,\dots,i_{K}\in\{0,1\}$ are the binary digits of $i-1$
\[
 i-1=\sum_{n=1}^{K}i_n2^{n-1}.
\]

Then for $M\leq N$, all $0\leq d_1\leq \dots \leq d_{2k+1}\leq N-M$ and all $1\leq s_1,s_2,\dots , s_{2k+1}\leq S$ with $d_i\neq d_j$ if $s_i=s_j$ we have
\begin{equation*}
\begin{split}
&\left|\sum_{n=1}^M e_{n+d_1}(s_1)e_{n+d_2}(s_2)\dots e_{n+d_{2k+1}}(s_{2k+1})\right|\\
&\leq \left|\sum_{n=1}^{K-1} e_{n+d_1}(s_1)e_{n+d_2}(s_2)\dots e_{n+d_{2k+1}}(s_{2k+1})\right|\notag \\
&\qquad \qquad+ \left|\sum_{n=K}^M e_{n+d_1}(s_1)e_{n+d_2}(s_2)\dots e_{n+d_{2k+1}}(s_{2k+1})\right|\\
 &\leq  \, K-1 + \left|\sum_{n=K}^M (-1)^n\right|\leq K
\end{split}
\end{equation*}
which proves the upper bound.
\end{proof}

The proof of Theorem \ref{thm:min-even} is similar to the proof of \eqref{eq:c_k-min}, it is based on the following lemmas (Lemmas 5 and 6 in \cite{AKMMR-min}).

\begin{lemma}\label{lemma:1}
 For a symmetric matrix $\A=(A_{i,j})_{1\leq 1,j\leq n}$ with $A_{i,i}=1$ for all $i$ and $|A_{i,j}|\leq \varepsilon$ for all $i\neq j$ we have
 \[
\rank (\A)\geq \frac{n}{1+\varepsilon^2 (n-1)}.
 \]
\end{lemma}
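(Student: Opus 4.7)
The plan is to bound the rank of $\A$ by comparing two different expressions for the traces of $\A$ and $\A^2$, combined with a Cauchy--Schwarz inequality on the nonzero eigenvalues.

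First I would use that $\A$ is symmetric, so it is diagonalisable with real eigenvalues. Let $r=\rank(\A)$ and let $\lambda_1,\dots,\lambda_r$ denote the nonzero eigenvalues of $\A$ (with multiplicity). Since $\A_{i,i}=1$, we have
\[
\lambda_1+\dots+\lambda_r=\operatorname{tr}(\A)=n.
\]
On the other hand, by the bounds on the entries,
\[
\lambda_1^2+\dots+\lambda_r^2=\operatorname{tr}(\A^2)=\sum_{i,j}A_{i,j}^2\leq n+n(n-1)\varepsilon^2.
\]

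Next I would apply the Cauchy--Schwarz inequality to the $r$-tuples $(\lambda_1,\dots,\lambda_r)$ and $(1,\dots,1)$, which gives
\[
n^2=\Bigl(\sum_{i=1}^r \lambda_i\Bigr)^{\!2}\leq r\sum_{i=1}^r \lambda_i^2\leq r\bigl(n+n(n-1)\varepsilon^2\bigr)=rn\bigl(1+(n-1)\varepsilon^2\bigr).
\]
Rearranging yields $r\geq n/(1+\varepsilon^2(n-1))$, which is the claimed bound.

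There is no real obstacle here: the argument is a short trace-and-Cauchy--Schwarz calculation, and each ingredient (symmetry giving real spectrum, $\operatorname{tr}(\A^2)=\sum A_{i,j}^2$, and the elementary inequality on the nonzero eigenvalues) is standard. The only thing to be careful about is that Cauchy--Schwarz is applied to the $r$ \emph{nonzero} eigenvalues so that the factor on the right-hand side is $r$ rather than $n$.
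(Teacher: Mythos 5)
Your proof is correct. The paper states this lemma without proof, citing Lemmas 5 and 6 of the Alon--Kohayakawa--Mauduit--Moreira--R\"odl paper, and your trace-plus-Cauchy--Schwarz argument (comparing $\operatorname{tr}(\A)=n$ with $\operatorname{tr}(\A^2)=\sum_{i,j}A_{i,j}^2$ over the $r$ nonzero eigenvalues) is exactly the standard argument given there.
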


\begin{lemma}\label{lemma:2}
 Let $\A=(A_{i,j})_{1\leq 1,j\leq n}$ be a real matrix with $A_{i,i}=1$ for all $i$ and $|A_{i,j}|\leq \varepsilon$ for all $i\neq j$, where $\sqrt{1/n}\leq \varepsilon \leq 1/2$. Then
 \begin{equation}\label{eq:lemma-2}
\rank (\A)\geq \frac{1}{100\varepsilon ^2 \log (1/\varepsilon )}\log n.
 \end{equation}
 If $\A$ is symmetric, then (\ref{eq:lemma-2}) holds with the constant $1/100$ replaced by $1/50$.
\end{lemma}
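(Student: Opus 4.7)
The plan follows the classical Hadamard-power trick of Alon. Given $\A$ with $A_{i,i}=1$ and $|A_{i,j}|\le \varepsilon$ off-diagonal, I consider the entry-wise $t$-th power $\A^{\odot t}$ defined by $(\A^{\odot t})_{i,j}=A_{i,j}^t$. Its diagonal stays equal to $1$ while off-diagonal entries have absolute value at most $\varepsilon^t$, so increasing $t$ drives the off-diagonal mass down exponentially. The trade-off is a controlled rank increase: writing $\A=\sum_{k=1}^{r}\mathbf{u}_k\mathbf{v}_k^T$ with $r=\rank(\A)$ and expanding $A_{i,j}^t=\bigl(\sum_k u_k(i)v_k(j)\bigr)^t$, grouping the resulting monomials by the multiset $\{k_1,\dots,k_t\}\subset\{1,\dots,r\}$ exhibits $\A^{\odot t}$ as a linear combination of at most $\binom{r+t-1}{t}$ rank-one matrices, hence $\rank(\A^{\odot t})\le \binom{r+t-1}{t}$.

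For the symmetric case, $\A^{\odot t}$ is itself symmetric with $1$'s on the diagonal and $\varepsilon^t$ off-diagonal bound, so I apply Lemma \ref{lemma:1} directly:
\[
 \binom{r+t-1}{t} \;\ge\; \rank(\A^{\odot t}) \;\ge\; \frac{n}{1+\varepsilon^{2t}(n-1)}.
\]
Choosing $t=\lceil \log n/(2\log(1/\varepsilon))\rceil$ makes $\varepsilon^{2t}(n-1)\le 1$, so the right-hand side is at least $n/2$. Combining with $\binom{r+t-1}{t}\le (e(r+t-1)/t)^t$ and taking logarithms forces $e(r+t-1)/t\ge (n/2)^{1/t}\approx \varepsilon^{-2}$, which rearranges to $r\gtrsim t/\varepsilon^2\approx \log n/(2\varepsilon^2\log(1/\varepsilon))$; the constant $1/50$ is loose enough to absorb the Stirling-type slack and the rounding of $t$ to an integer.

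For the non-symmetric case, the cleanest reduction is to symmetrize: set $\A'=(\A+\A^T)/2$, which is symmetric with $A'_{i,i}=1$ and $|A'_{i,j}|\le \varepsilon$, and whose rank satisfies $\rank(\A')\le \rank(\A)+\rank(\A^T)=2r$. Applying the symmetric case already proved to $\A'$ yields
\[
 2r \;\ge\; \rank(\A') \;\ge\; \frac{\log n}{50\,\varepsilon^2\log(1/\varepsilon)},
\]
so dividing by $2$ gives the non-symmetric bound with constant $1/100$.

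The main obstacle is not conceptual but the careful bookkeeping of constants, especially near the boundary $\varepsilon\approx 1/2$ where $\log(1/\varepsilon)$ is comparable to $1$ and the optimal $t$ is of the same order as $r$, so that the inequality $\binom{r+t-1}{t}\le (e(r+t-1)/t)^t$ together with the prefactor $1/(e\varepsilon^2)-1$ needs watching. The hypothesis $\sqrt{1/n}\le \varepsilon\le 1/2$ precisely guarantees $t\ge 1$ (so the Hadamard trick is non-trivial) and $\log(1/\varepsilon)\ge \log 2$ (so the denominator in the claimed bound is bounded below); the generous constants $1/50$ and $1/100$ in the statement comfortably absorb the remaining slack in the simple analysis above.
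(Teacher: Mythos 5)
First, a remark on the comparison: the paper does not prove this lemma at all --- it is imported verbatim as Lemma 6 of \cite{AKMMR-min}, so the only proof to compare against is the one in that reference, which is precisely the Hadamard-power argument you outline. Your overall strategy is therefore the ``same'' as the source's: the bound $\rank(\A^{\odot t})\leq\binom{r+t-1}{t}$ via the multinomial expansion is correct, and the symmetrization $\A'=(\A+\A^T)/2$ with $\rank(\A')\leq 2\rank(\A)$ is a clean and valid way to get the factor $1/100$ from the symmetric case.

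There is, however, a genuine gap in the quantitative step, and it is not a matter of constants that ``$1/50$ comfortably absorbs.'' Writing $a=\log n/(2\log(1/\varepsilon))$, your choice $t=\lceil a\rceil$ makes $(n/2)^{1/t}$ of order $\varepsilon^{-2a/\lceil a\rceil}$, and when $a$ is slightly larger than a small integer this is nowhere near $\varepsilon^{-2}$: the approximation ``$(n/2)^{1/t}\approx\varepsilon^{-2}$'' loses a factor that can be as large as $\varepsilon^{-1}$. Concretely, take $\varepsilon=n^{-1/2+\delta}$ with $\delta>0$ small, so $a=1/(1-2\delta)$ is just above $1$ and $t=2$. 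Your chain then gives only $\binom{r+1}{2}\geq n/2$, i.e.\ $r\gtrsim n^{1/2}$, while the claimed lower bound is $\log n/(50\varepsilon^2\log(1/\varepsilon))\sim n^{1-2\delta}/25$, which is larger by a power of $n$. (The same loss recurs whenever $\lceil a\rceil$ exceeds $a$ and $a$ is small, e.g.\ $\varepsilon=n^{-1/4+\delta}$.) The repair is to round \emph{down}: take $t=\lfloor a\rfloor$, which is $\geq 1$ exactly because $\varepsilon\geq 1/\sqrt n$. Then $\varepsilon^{2t}\geq 1/n$, so $(n-1)\varepsilon^{2t}\geq (n-1)/n$ and Lemma \ref{lemma:1} gives the scale-correct bound $\rank(\A^{\odot t})\geq n/(1+(n-1)\varepsilon^{2t})\geq \varepsilon^{-2t}/2$, whose $t$-th root is $\varepsilon^{-2}2^{-1/t}$; this is $\geq \varepsilon^{-2}(1-o(1))$ when $t$ is large and is $\gg 1$ anyway when $t$ is small (since then $\varepsilon^{-2}$ is close to $n$), so the final inequality $e(r+t-1)/t\gtrsim\varepsilon^{-2}$ holds uniformly and yields $r\gtrsim t\varepsilon^{-2}\gtrsim \log n/(\varepsilon^2\log(1/\varepsilon))$ with workable constants. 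You also need to dispose of the case $r<t$ before writing $r+t-1\leq 2r$ (or work with $\binom{r+t-1}{r-1}\leq(e(r+t-1)/(r-1))^{r-1}$ there); as written, the inequality $e(r+t-1)/t\geq(n/2)^{1/t}$ is nearly vacuous in that range near $\varepsilon=1/2$.
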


\begin{proof}[Theorem \ref{thm:min-even}]
First consider the case when $S$ is large $2kN\leq S$.
Let $t=\lfloor S/k \rfloor$ and let $L_1,L_2,\dots, \allowbreak L_t\subset \{1,2,\dots, S\}$ be distinct subsets with $k$ elements. For each $L_i$ ($i=1,2,\dots, t$) we assign the vector $\v_i\in\mathbb{R}^N$ with
 \[
  \v_{i,j}=\prod_{s\in L_i}e_j(s).
 \]
Define the matrix $\A=(A_{i,j})_{1\leq i,j \leq t}$ by
\[
 A_{i,j}=\frac{1}{N}\langle \v_i,\v_j \rangle=\frac{1}{N}\sum_{n=1}^N \prod_{s\in L_i \cup L_j} e_{n}(s),
\]
where $\langle \cdot, \cdot\rangle$ is the standard inner-product. Then we have
\[
 A_{i,i}=1, \quad i=1,2,\dots, t, 
\]
and
\begin{equation}\label{eq:thm1-0}
  \Phi_{2k}\left(G_{N,S}\right)\geq N \max \{|A_{i,j}|: \ 1\leq i,j \leq t, \ i\neq j \}.
\end{equation}

Let $\B=(\v_i^T)_{1\leq i \leq t}$ be the $t \times N$ matrix with rows $\v_i^T$ ($1\leq i \leq t$). Clearly $\A=N^{-1}\B\B^T$, thus $\rank (\A)\leq N$. On the other hand, by Lemma \ref{lemma:1} we have
\begin{equation}\label{eq:thm1-1}
 N\geq \rank(\A)\geq \frac{t}{1+\varepsilon ^2 (t-1)},
\end{equation}
where $\varepsilon =\max \{|A_{i,j}|: \ 1\leq i,j \leq t, \ i\neq j \}$. (\ref{eq:thm1-1}) gives
\begin{equation}\label{eq:thm1-2}
 \varepsilon^2 \geq \frac{1}{N} - \frac{1}{t}\geq \frac{1}{t}. 
\end{equation}

Now we are ready to complete the proof of the result. If $\varepsilon >1/2$, then the theorem trivially holds. Otherwise, by (\ref{eq:thm1-2}) we have $\sqrt{1/t}\leq \varepsilon \leq 1/2$, thus by Lemma \ref{lemma:2} we have
\begin{equation*}
 N\geq \rank(\A)\geq \frac{1}{50 \varepsilon^2 \log (1/\varepsilon)} \log \left\lfloor \frac{S}{k}\right\rfloor,
\end{equation*}
whence
\begin{equation}\label{eq:epsilon-1}
  \varepsilon^2 \log (1/\varepsilon)\geq \frac{1}{50 N} \log \left\lfloor \frac{S}{k}\right\rfloor.
\end{equation}
Using that $1/\varepsilon \geq \log 1/\varepsilon$, we have from \eqref{eq:epsilon-1}, that
\begin{equation}\label{eq:epsilon-2}
 \varepsilon \geq  \varepsilon^2 \log (1/\varepsilon)\geq \frac{1}{50 N} \log \left\lfloor \frac{S}{k}\right\rfloor.
\end{equation}
Thus from \eqref{eq:epsilon-1} and \eqref{eq:epsilon-2} we get
\begin{equation*}
 \varepsilon^2 \log \frac{50 N}{\log \lfloor S/k \rfloor}\geq \frac{1}{50 N} \log \left\lfloor \frac{S}{k}\right\rfloor,
\end{equation*}
and hence
\begin{equation}\label{eq:thm1-f}
  \varepsilon \geq \sqrt{ \frac{\log \lfloor S/k \rfloor}{50 N} \left/ { \log \frac{50 N}{\log \lfloor S/k \rfloor}  } \right. }.
\end{equation}
Then \eqref{eq:min-1} follows from \eqref{eq:thm1-0} and \eqref{eq:thm1-f}.

Next, consider the case when $S$ is small $2kN>S$. Put $\ell=\left\lceil{k}/{S} \right\rceil$ and write $M=\lfloor N/(2\ell +1)\rfloor$ and $N'=N-M+1$. Let $L_1,\dots, L_t$ 
pairwise disjoint $k$-element subsets of $\{1,\dots, S\}\times \{1,\dots, N'\}$ with
\[
 t\geq S\cdot  \left\lfloor\frac{N'}{\ell}\right\rfloor.
\]
Then
\[
  t\geq S\cdot  \left\lfloor\frac{N-\lfloor N/(2\ell +1) \rfloor+1}{\ell}\right\rfloor\geq 2M.
\]
In the same way as before, we get from Lemma \ref{lemma:1} that
\[
 \Phi_{2k}\geq M \sqrt{\frac{1}{M}-\frac{1}{t}}\geq \sqrt{M-\frac{M^2}{t}}\geq \sqrt{M-\frac{M}{2}}\geq \sqrt{\frac{N}{2\lceil k/S \rceil+1}}
\]
and \eqref{eq:min-2} follows.
\end{proof}

\subsection*{Acknowledgment}
The author would like to thank Andr\'as S\'ark\"ozy for helpful discussions. The author also thank the anonymous referees for their valuable comments.

The author is partially supported by the Austrian Science Fund FWF Project F5511-N26 
which is part of the Special Research Program "Quasi-Monte Carlo Methods: Theory and Applications" and by Hungarian National Foundation for Scientific Research,
Grant No. K100291.



\begin{thebibliography}{99}
\bibitem{ACS} R. Ahlswede, J. Cassaigne, A S\'ark\"ozy, On the correlation of binary sequences, Discrete Appl. Math. \textbf{156} (2008), no. 9, 1478--1487. 


\bibitem{AKMMR-min} N. Alon, Y. Kohayakawa, C. Mauduit, C. G. Moreira, V. R\"odl, Measures of pseudorandomness for finite sequences: minimal values, Combin. Probab. Comput. \textbf{15} (2006), no. 1-2, 1--29.


\bibitem{AKMMR} N. Alon, Y. Kohayakawa, C. Mauduit, C. G. Moreira, V. R\"odl, Measures of pseudorandomness for finite sequences: typical values, Proc. Lond. Math. Soc.  \textbf{95} (2007), no. 3, 778--812.



\bibitem{AlonSpencer}  N. Alon, J. H. Spencer, The probabilistic method, second ed., Wiley-Interscience Series in Discrete Mathematics and Optimization, Wiley-Interscience [John Wiley \& Sons], New York, 2000, With an appendix on the life and work of Paul Erd\H{o}s.


\bibitem{CMS} J. Cassaigne, C. Mauduit, A. S\'ark\"ozy, On finite pseudorandom binary sequences VII: The measures of pseudorandomness, Acta Arith. \textbf{103} (2002), no. 2, 97--118.

\bibitem{Bollobas}  B. Bollob\'as, Random graphs, Academic Press Inc. [Harcourt Brace Jovanovich Publishers], London, 1985. 

\bibitem{feller} J. W. Feller, An introduction to probability theory and its applications. Vol. I, Third
edition, John Wiley \& Sons Inc., New York, 1968.



\bibitem{gyarmati-correlation-measure} K. Gyarmati, On the correlation of binary sequences, Studia Sci. Math. Hungar. \textbf{42} (2005), 59--75

\bibitem{measures-survay} K. Gyarmati, Measures of pseudorandomness, P. Charpin, A. Pott, A. Winterhof (eds.), Radon Series in Computational and Applied Mathematics, de Gruyter 2013, 43-64. 

\bibitem{cross-correlation}  K. Gyarmati, C. Mauduit, A. S\'ark\"ozy, The cross-correlation measure for families of binary sequences, Applied algebra and number theory, 126--143, Cambridge Univ. Press, Cambridge, 2014. 

\bibitem{measures} C. Mauduit, A. S\'ark\"ozy, On finite pseudorandom binary sequences I: Measures of pseudorandomness, the Legendre symbol, Acta Arith. \textbf{82} (1997) 365--377.

\bibitem{MS-2003} C. Mauduit, A. S\'ark\"ozy, On the measures of pseudorandomness of binary sequences,
Discrete Math. \textbf{271} (2003), no. 1-3, 195--207. 

\bibitem{McDiarmid}  C. McDiarmid, On the method of bounded differences, Surveys in combinatorics, 1989
(Norwich, 1989), London Math. Soc. Lecture Note Ser., vol. 141, Cambridge Univ.
Press, Cambridge, 1989, pp. 148--188.

\bibitem{merai-cross-correlation-typical} L. M\'erai, On the typical values of the cross-correlation measure, Monatsh. Math. \textbf{180} (2016) no. 1 83--99. 


\bibitem{sarkozy-family-survey} A. S\'ark\"ozy, On pseudorandomness of families of binary sequences, Discrete Applied Mathematics, (2015) doi:10.1016/j.dam.2015.07.031

\bibitem{kai_k=2} K.-U. Schmidt, The peak sidelobe level of random binary sequences, Bull. London Mat. Soc. \textbf{46} (2014), no. 3, 643--652.

\bibitem{kai} K.-U. Schmidt, The correlation measures of finite sequences: limiting distributions and minimum values,  Trans. Amer. Math. Soc., (2016) doi: 10.1090/tran6650

\end{thebibliography}

\end{document}